\newtheorem{assumption}{Assumption}
\newcommand{\ba}{\mathbf a}            
\newcommand{\bw}{\mathbf w}            
\DeclareMathOperator*{\argmin}{arg\,min}
\DeclareMathOperator*{\Min}{minimize}
\begin{document}

\title{Fast L1-L2 Minimization via a Proximal Operator\thanks{This work was partially supported by the NSF grants DMS-1522786 and DMS-1621798.}
}


\author{Yifei Lou      \and
        Ming Yan 
}


\institute{ 
Y. Lou \at 
Department of Mathematical Sciences, University of Texas at Dallas\\
    \email{yifei.lou@utdallas.edu}
           \and					
M. Yan \at Department of Computational Mathematics, Science and Engineering (CMSE) and 
  	Department of Mathematics, Michigan State University\\ 
		\email{yanm@math.msu.edu}
}

\date{Received: date / Accepted: date}

\maketitle

\begin{abstract}
This paper aims to develop new and fast algorithms for recovering a sparse vector from a small number of measurements, which is a fundamental problem in the field of compressive sensing (CS). Currently, CS favors incoherent systems, in which any two measurements are as little correlated as possible. In reality, however, many problems are coherent, and conventional methods such as $L_1$ minimization do not work well. Recently, the difference of the $L_1$ and $L_2$ norms, denoted as $L_1$-$L_2$, is shown to have superior performance  over the classic $L_1$ method, but it is computationally expensive. 
We derive an analytical solution for the proximal operator of the $L_1$-$L_2$ metric, and it makes some fast $L_1$ solvers such as forward-backward splitting (FBS) and alternating direction method of multipliers (ADMM) applicable for $L_1$-$L_2$.
We describe in details how to incorporate the proximal operator into FBS and ADMM and show that the resulting algorithms are convergent under mild conditions. Both algorithms are shown to be much more efficient than the original implementation of $L_1$-$L_2$ based on a difference-of-convex approach in the numerical experiments.
\keywords{Compressive sensing \and proximal operator \and forward-backward splitting \and alternating direction method of multipliers \and difference-of-convex}
\subclass{90C26 \and 65K10 \and 49M29}
\end{abstract}


\section{Introduction}

Recent developments in science and technology have caused a revolution in data processing, as large datasets are becoming increasingly available and important. To meet the need in ``big data'' era, the field of compressive sensing (CS)~\cite{donoho06,candesRT06} is rapidly blooming.
The process of CS consists of \textit{encoding} and \textit{decoding}. The process of encoding involves taking a set of (linear) measurements, $b = Ax$, where $A$ is a matrix of size $M\times N$. If $M<N$, we say the signal $x\in\mathbb{R}^N$ can be compressed.  The process of decoding is to recover $x$ from $b$ with an additional assumption that $x$ is sparse. It can be expressed as an optimization problem,
\begin{equation}\label{eq:CS}
\Min_x~\|x\|_0 \quad \mbox{subject to} \quad Ax = b,
\end{equation}
with $\|\cdot\|_0$ being the $L_0$ ``norm''. Since $L_0$ counts the number of non-zero elements, minimizing the $L_0$ ``norm'' is equivalent to finding the sparsest solution.

One of the biggest obstacles in CS is solving the decoding problem, eq.~(\ref{eq:CS}), as $L_0$ minimization is NP-hard ~\cite{natarajan95}.  A popular approach is to replace $L_0$ by a convex norm $L_1$, which often gives a satisfactory sparse solution. This $L_1$ heuristic has been applied in many different fields such as geology and geophysics~\cite{santosaS86}, spectroscopy~\cite{mammone83}, and ultrasound imaging~\cite{papoulisC79}. A revolutionary breakthrough in CS was the derivation of the restricted isometry property (RIP)~\cite{candesRT06}, which gives a sufficient condition of  $L_1$ minimization to recover the sparse solution exactly. It was proved in~\cite{candesRT06} that random matrices satisfy the RIP with high probabilities, which makes RIP seemingly applicable. 
However,  it is NP-hard to verify the RIP for a given matrix. 
A deterministic result in~\cite{donohoE03,gribonval2003sparse} says that exact sparse recovery using $L_1$ minimization is possible if
\begin{equation}\label{ineq:sparseVScoherence}
\textstyle \|x\|_0<1/2 +1/(2\mu),
\end{equation}
where $\mu$ is the  mutual coherence of a matrix $A$,  defined as
\begin{equation*}
\mu(A) = \max_{i\neq j} \dfrac{|\ba_i^\top\ba_j|}{\|\ba_i\|_2\|\ba_j\|_2}, \quad \mbox{with} \ A=[\ba_1, \cdots, \ba_N].
\end{equation*}
The inequality (\ref{ineq:sparseVScoherence}) suggests that $L_1$ may not perform well
for highly coherent matrices. When the matrix is highly coherent, we have $\mu\sim 1$, then the sufficient condition $\|x\|_0\leq 1$ means that $x$ has at most one non-zero element.

Recently, there has been an increase in applying nonconvex metrics as alternative approaches to $L_1$.
In particular, the nonconvex metric $L_p$ for $p\in(0,1)$
in~\cite{chartrand07,chartrandY08,krishnanF09,Xu2012} can be regarded as a continuation strategy to approximate $L_0$ as $p\rightarrow 0$. 
The optimization strategies include iterative reweighting~\cite{chartrand07,chartrandY08,laiXY13} and half thresholding~\cite{woodworth2015compressed,wuSL15,Xu2012}. The scale-invariant $L_1$, formulated as the ratio of $L_1$ and $L_2$, was discussed in~\cite{esserLX13,repetti2015euclid}.  Other nonconvex $L_1$ variants include transformed $L_1$~\cite{zhangX14}, sorted $L_1$~\cite{huangSY15}, and capped $L_1$~\cite{louYX15}.
It is demonstrated in a series of papers~\cite{louOX15,louYHX14,yinLHX14} that the difference of the $L_1$ and $L_2$ norms, denoted as $L_1$-$L_2$, outperforms $L_1$ and $L_p$ in terms of promoting sparsity when sensing matrix $A$ is highly coherent. Theoretically, a RIP-type sufficient condition is given in~\cite{yinLHX14} to guarantee that $L_1$-$L_2$ can exactly recover a sparse vector.

In this paper, we generalize the $L_1$-$L_2$ formalism by considering the $L_1-\alpha L_2$ metric for $\alpha\geq 0$. Define
\begin{align*}
r_\alpha(x)=\|x\|_1-\alpha\|x\|_2.
\end{align*}
We  consider an unconstrained minimization problem to allow the presence of noise in the data, i.e.,
\begin{align}\label{eq:main_pro}
\Min_x~E(x) \equiv r_\alpha(x)+l(x),
\end{align}
where $l(x)$ has a Lipschitz continuous gradient with Lipschitz constant $L$. 
Computationally, it is natural to apply difference-of-convex algorithm (DCA)~\cite{TA98} to minimize the $L_1$-$L_2$ functional. 
The DCA decomposes the objective function as the difference of two convex functions, i.e., $E(x) = G(x)-H(x)$, where
\begin{equation*}
\left\{\begin{array}{l}
G(x) = l(x) + \|x\|_1,\\
H(x) = \alpha\|x\|_2 .
\end{array}\right.\end{equation*}
Then, giving an initial $x^0\neq\mathbf{0}$, we obtain the next iteration by linearing $H(x)$ at the current iteration, i.e.,
\begin{align}
x^{n+1} \in&\textstyle \argmin_{x}~l(x)  + \|x\|_1-\|x^n\|_2-\left\langle x-x^n,\alpha\frac{x^n}{\|x^n\|_2}\right\rangle\nonumber\\
=&\textstyle \argmin_{x}~l(x)  + \|x\|_1-\alpha\left\langle x,\frac{x^n}{\|x^n\|_2}\right\rangle. \label{iter}
\end{align}
It is an $L_1$ minimization problem, which may not have analytical solutions  and usually requires to apply iterative algorithms.
It was proven in~\cite{yinLHX14} that the iterating sequence~\eqref{iter} converges to a stationary point of the unconstrained problem~\eqref{eq:main_pro}. Note that the DCA for $L_1$-$L_2$ is equivalent to alternating mininization for the following optimization problem:
\begin{equation*}
\Min_{x,q\in \mathbb R^N, \|q\|_2\leq 1} l(x)  + \|x\|_1+\alpha\langle x,q\rangle,
\end{equation*}
because $q=-{x\over\|x\|_2}$ for any fixed $x$.
Since DCA for $L_1$-$L_2$ amounts to solving an  $L_1$ minimization problem iteratively as a subproblem,  it is much slower than $L_1$ minimization. This motivates fast approaches proposed in this work.

We propose fast approaches for minimizing \eqref{eq:main_pro}, which are approximately of the same computational complexity as $L_1$. The main idea is based on a proximal operator corresponding to $L_1$-$\alpha L_2$. We then consider two numerical algorithms: forward-backward splitting (FBS) and alternating direction method of multipliers (ADMM), both of which are proven to be convergent under mild conditions.
The contributions of this paper are:
\begin{itemize}
\item We derive analytical solutions for the proximal mapping of $r_\alpha(x)$ in Lemma~\ref{lemma:prox}.
\item We propose a fast algorithm---FBS with this proximal mapping---and show its convergence in Theorem~\ref{thm:fbs}. Then, we analyze the properties of fixed points of FBS and show that FBS iterations are not trapped at stationary points near~$\mathbf{0}$ if the number of non-zeros is greater than one. It explains that FBS tends to converge to sparser stationary points when the $L_2$ norm of the stationary point is relatively small; see Lemma~\ref{lemma:nece} and Example~\ref{example:FBS}.
\item We propose another fast algorithm based on ADMM and show its convergence in Theorem~\ref{thm:admm}. This theorem applies to a general problem--minimizing the sum of two (possibly nonconvex) functions where one function has a Lipschitz continuous gradient and the other has an analytical proximal mapping or the mapping can be computed easily.
\end{itemize}

The rest of the paper is organized as follows. We detail the proximal operator in Section~\ref{sect:main}. The  numerical algorithms (FBS  and ADMM) are described in Section~\ref{sect:fbs} and Section~\ref{sect:admm}, respectively, each with convergence analysis. In  Section~\ref{sect:experiments}, we numerically compare the proposed methods with the DCA on different types of sensing matrices. During experiments, we observe a need to apply a continuation strategy of $\alpha$ to improve sparse recovery results. Finally, Section~\ref{sect:conclusion} concludes the paper.

\section{Proximal operator}\label{sect:main}
In this section, we present a closed-form solution of the proximal operator for $L_1$-$\alpha L_2$,  defined as follows,
\begin{equation}\label{eq:L12_uncon}
\textstyle \mbox{\bf prox}_{\lambda r_\alpha}(y)=\argmin_{x}~\|x\|_1-\alpha\|x\|_2 + \frac{1}{2\lambda} \|x-y\|_2^2,
\end{equation}
for a positive parameter $\lambda>0$. Proximal operator is particularly useful in convex optimization~\cite{rockafellar2015convex}. For example, the proximal operator for $L_1$ is called \textit{soft shrinkage}, defined as
\begin{equation*}
\mathcal S_1(y,\lambda) = \left\{
\begin{array}{ll}
y-\lambda, & \mbox{if } y>\lambda,\\
0, &\mbox{if } |y|\leq \lambda,\\
y+\lambda, &\mbox{if } y<-\lambda.
\end{array}
\right.
\end{equation*}
The soft shrinkage operator is a key for rendering many efficient $L_1$ algorithms. By replacing the soft shrinkage with $\mbox{\bf prox}_{\lambda r_\alpha}$, most fast $L_1$ solvers such as FBS and ADMM are applicable for $L_1$-$\alpha L_2$, which will be detailed in Sections~\ref{sect:fbs} and~\ref{sect:admm}.
The closed-form solution of  $\mbox{\bf prox}_{\lambda r_\alpha}$ is characterized in Lemma~\ref{lemma:prox}, while Lemma~\ref{lemma:assumption_not_satisfy} gives an important inequality to prove the convergence of FBS and ADMM when combined with the proximal operator.

\begin{lemma}\label{lemma:prox}
Given $y\in\mathbb R^N$, $\lambda>0$, and $\alpha\geq 0$, we have the following statements about the optimal solution $x^*$ to the optimization problem in~\eqref{eq:L12_uncon}:
\begin{enumerate}
\item[1)] When $\|y\|_\infty >\lambda$, $x^*= z(\|z\|_2+\alpha\lambda)/\|z\|_2$ for $z = \mathcal S_1(y,\lambda)$.
\item[2)] When $\|y\|_\infty =\lambda$, $x^*$ is an optimal solution if and only if it satisfies $x^*_i=0$ if $|y_i|<\lambda$, $\|x^*\|_2=\alpha\lambda$, and $x^*_iy_i\geq 0$ for all $i$. When there are more than one components having the maximum absolute value $\lambda$, the optimal solution is not unique; in fact, there are infinite many optimal solutions.
\item[3)] When $(1-\alpha)\lambda<\|y\|_\infty <\lambda$, $x^*$ is an optimal solution if and only if it is a 1-sparse vector satisfying $x^*_i=0$ if $|y_i|<\|y\|_\infty$, $\|x^*\|_2=\|y\|_\infty+(\alpha-1)\lambda$, and $x^*_iy_i\geq 0$ for all $i$. The number of optimal solutions is the same as the number of components having the maximum absolute value $\|y\|_\infty$.
\item[4)] When $\|y\|_\infty\leq(1-\alpha)\lambda$, $x^*=0$.
\end{enumerate}
\end{lemma}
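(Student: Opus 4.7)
My plan is to reduce the $N$-dimensional nonconvex problem~\eqref{eq:L12_uncon} to a scalar optimization via the polar substitution $x=tu$ with $t=\|x\|_2\ge 0$ and $\|u\|_2=1$, letting $t=0$ cover the candidate $x=0$. For fixed $u$, the objective
\[
t\|u\|_1-\alpha t+\frac{1}{2\lambda}\bigl(t^2-2t\langle u,y\rangle+\|y\|_2^2\bigr)
\]
is a convex quadratic in $t$, minimized on $t\ge 0$ at $t^*(u)=\max\bigl\{0,\;\langle u,y\rangle-\lambda\|u\|_1+\alpha\lambda\bigr\}$; substituting back collapses the value to $\frac{1}{2\lambda}\bigl(\|y\|_2^2-t^*(u)^2\bigr)$. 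So minimizing~\eqref{eq:L12_uncon} is equivalent to \emph{maximizing} $t^*(u)$ over $\|u\|_2=1$, which is the core step of the proof.

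Next I would observe that at an optimum one may take $u_i=\mathrm{sign}(y_i)v_i$ with $v_i\ge 0$, because aligning signs with $y$ only enlarges $\langle u,y\rangle$ while preserving $\|u\|_1$ and $\|u\|_2$. The direction problem becomes
\[
\alpha\lambda+\max\Bigl\{\textstyle\sum_i v_i(|y_i|-\lambda):\ v\ge 0,\ \|v\|_2=1\Bigr\},
\]
which I would split on $\|y\|_\infty$. If $\|y\|_\infty>\lambda$, the nonnegative part $w_i:=\max\{|y_i|-\lambda,0\}$ is nonzero and Cauchy--Schwarz gives the unique maximizer $v=w/\|w\|_2$ with value $\|w\|_2=\|\mathcal S_1(y,\lambda)\|_2$; reinstating $x_i^*=t^*(u)\mathrm{sign}(y_i)v_i$ reproduces exactly the formula of Case~1. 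If $\|y\|_\infty=\lambda$, the linear functional is nonpositive and vanishes precisely on the maximizing indices, so its maximum is $0$, attained by any nonnegative unit $v$ supported there; this yields $\|x^*\|_2=\alpha\lambda$ and the conditions of Case~2. If $(1-\alpha)\lambda<\|y\|_\infty<\lambda$, all coefficients are strictly negative, so the elementary bound $\sum v_i=\|v\|_1\ge\|v\|_2=1$ on the nonnegative sphere (with equality iff $v$ is a standard basis vector) gives $\sum_i v_i(|y_i|-\lambda)\le\|y\|_\infty-\lambda$, attained only at $v=e_k$ with $|y_k|=\|y\|_\infty$, producing the one-sparse solutions of Case~3. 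Finally, if $\|y\|_\infty\le(1-\alpha)\lambda$, the whole maximum is $\le 0$, so $t^*(u)\equiv 0$ and Case~4 follows.

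The delicate point is that~\eqref{eq:L12_uncon} is nonconvex, so first-order/KKT conditions alone cannot certify global optimality; the polar reduction bypasses this because the reduced value $\frac{1}{2\lambda}(\|y\|_2^2-t^*(u)^2)$ is a monotone function of $t^*(u)$, turning global minimization into an explicit maximization against the baseline $t=0$. The main obstacle I expect is the one-sparseness claim in Case~3, where I need to rule out multi-support maximizers on the nonnegative unit sphere when every coefficient is negative; the $\|v\|_1\ge\|v\|_2$ inequality dispatches this in one line, and the same argument makes the multiplicity statements of Cases~2 and~3 (one optimum per maximizing index) transparent. Reinstating $x_i^*=t^*(u)\mathrm{sign}(y_i)v_i$ in each case then recovers the sign condition $x_i^*y_i\ge 0$, the support restriction off the maximizing indices, and the $\|x^*\|_2$ values stated in the lemma.
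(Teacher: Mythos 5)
Your proof is correct, and it takes a genuinely different route from the paper. The paper works through the first-order optimality condition $\bigl(1-\frac{\alpha\lambda}{\|x\|_2}\bigr)x=y-\lambda p$ with $p\in\partial\|x\|_1$, observes that every nonzero point satisfying it has objective value $\frac{1}{2\lambda}\bigl(\|y\|_2^2-\|x\|_2^2\bigr)$, and therefore selects the stationary point of largest norm by a case analysis on $\|y\|_\infty$. You arrive at the same ``largest norm wins'' principle without ever invoking stationarity: the polar substitution $x=tu$ lets you minimize exactly in $t$, collapsing the value to $\frac{1}{2\lambda}\bigl(\|y\|_2^2-t^*(u)^2\bigr)$, and the remaining direction problem is a linear maximization over the nonnegative unit sphere settled by Cauchy--Schwarz and the inequality $\|v\|_1\ge\|v\|_2$. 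What your route buys is a self-contained certificate of \emph{global} optimality in a nonconvex problem and, as a byproduct, the complete optimal set in each regime, which makes the ``if and only if'' and multiplicity claims of items 2) and 3) immediate; the paper's route is more compact but leans on the reader accepting that the global minimizer is either $0$ or a solution of the subgradient equation, and on comparing the values of all such solutions. One small point to tighten in your Case 1: before applying Cauchy--Schwarz you should note explicitly that an optimal $v$ puts no mass on coordinates with $|y_i|\le\lambda$, since such mass can only decrease $\sum_i v_i(|y_i|-\lambda)$ while consuming the unit-norm budget available to the strictly positive coordinates; this is a one-line observation, not a gap in the argument.
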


\begin{proof}
It is straightforward to obtain the following relations about the sign and order of the absolute values for the components in $x^*$, i.e.,
\begin{align*}
x^*_i\left\{\begin{array}{ll}\geq 0, &\mbox{ if }y_i>0,\\
\leq 0, &\mbox{ if }y_i<0,\end{array}\right.
\end{align*}
and 
\begin{align}\label{eqn:prox_order}
|x^*_i|\geq |x^*_j|\mbox{ if } |y_i|>|y_j|.
\end{align}
Otherwise, we can always change the sign of $x^*_i$ or swap the absolute values of $x^*_i$ and $x^*_j$ and obtain a smaller objective value.
Therefore, we can assume  without loss of generality that $y$ is a non-negative non-increasing vector, \emph{i.e.}, $y_1\geq y_2\geq \cdots\geq y_N\geq 0$.

Denote $F(x)=\|x\|_1-\alpha\|x\|_2 + \frac {1} {2\lambda} \|x-y\|_2^2$  and
the first-order optimality condition of minimizing $F(x)$  is expressed as
\begin{equation}\label{eq:opt4L12}
\left(1-\dfrac {\alpha\lambda} {\|x\|_2}\right)x = y-\lambda p \quad \mbox{for}\ x\neq0,
\end{equation}
where $p\in\partial\|x\|_1$ is a subgradient of the $L_1$ norm. When $x=0$, we have the first order optimality condition $\|y-\lambda p\|_2=\alpha \lambda$. Simple calculations show that for any $x\neq 0$ satisfying~\eqref{eq:opt4L12}, we have
\begin{align*}
F(x)=&\textstyle \|x\|_1-\alpha \|x\|_2+{1\over 2\lambda}\|x\|_2^2-\langle x, p+\left({1\over\lambda}-\frac {\alpha}{\|x\|_2}\right)x\rangle +{1\over 2\lambda}\|y\|_2^2\\
=&\textstyle -\alpha \|x\|_2+{1\over 2\lambda}\|x\|_2^2-\left({1\over\lambda}-{\alpha\over \|x\|_2}\right)\|x\|_2^2 +{1\over 2\lambda}\|y\|_2^2\\
=&\textstyle -\frac {1} {2\lambda} \|x\|_2^2+\frac {1} {2\lambda} \|y\|_2^2 <F(0).
\end{align*}
Therefore, we have to find the $x^*$ with the largest norm among all $x$ satisfying~\eqref{eq:opt4L12}.
Now we are ready to discuss the four items listed in order,
\begin{enumerate}
\item[1)] If $y_1 >\lambda$, then $y_1-\lambda p_1>0$. For the case of $x^*\neq0$, we have $x^*_1>0$ and $1-\frac {\alpha\lambda}{\|x^*\|_2}>0$. 
For any $i$ such that $y_i\leq \lambda$, we have $x_i=0$; otherwise for this $i$, the left-hand side (LHS) of~\eqref{eq:opt4L12} is positive, while the right-hand side (RHS) is nonpositive.
For any $i$ such that $y_i>\lambda$, we have that $p_i=1$. 
Therefore, $y-\lambda p=\mathcal{S}_1(y,\lambda)$. 
Let $z = \mathcal S_1(y,\lambda)$, and we have $x^*= z(\|z\|_2+\alpha\lambda)/\|z\|_2$. 
Therefore, $x^*\neq 0$ is the optimal solution.

\item[2)] If $y_1 = \lambda$, then $y_1-\lambda p_1\geq 0$. Let $j=\min\{i: y_i<\lambda\}$, and we have $x^*_i=0$ for $i\geq j$; otherwise for this $i$, RHS of~\eqref{eq:opt4L12} is negative, and hence $1-\frac {\alpha\lambda}{\|x^*\|_2}<0$. It implies that $x_1^*=0$ and $x^*$ is not a global optimal solution because of~\eqref{eqn:prox_order}.
For the case of $x^*\neq 0$, we have $1-\frac {\alpha\lambda}{\|x^*\|_2}=0$. 
Therefore, any optimal solution $x^*$ satisfy that $x^*_i=0$ for $i\geq j$, $\|x^*\|_2=\alpha\lambda$, and $x^*_iy_i\geq 0$ for all $i$. When there are multiple components of $y$ having the same absolute value $\lambda$, there  exist infinite many solutions.

\item[3)] Assume $(1-\alpha)\lambda<y_1 <\lambda$. Let $j=\min\{i: y_i<\|y\|_\infty\}$, and we have $x^*_i=0$ for $i\geq j$; otherwise for this $i$, RHS of~\eqref{eq:opt4L12} is negative, thus $1-\frac {\alpha\lambda}{\|x^*\|_2}<0$ 
and $y_1-\lambda p_1=\left(1-\frac {\alpha\lambda}{\|x^*\|_2}\right)x^*_1\leq \left(1-\frac {\alpha\lambda}{\|x^*\|_2}\right)x^*_i=y_i-\lambda p_i$, which is a contradiction to $y_1>y_i$.
For the case of $x^*\neq 0$, we have $1-\frac {\alpha\lambda}{\|x^*\|_2}<0$.
From~\eqref{eq:opt4L12}, we know that $\alpha\lambda-\|x^*\|_2 =\|y-\lambda p\|_2$. 
Finding $x^*$ with the largest norm is equivalent to finding $p\in \partial \|x^*\|_1$ such that $\|y-\lambda p\|_2$ is smallest and $x^*\neq 0$. So we choose $x^*$ to be a 1-sparse vector, and $\|x^*\|_2=\alpha\lambda-\|y-\lambda p\|_2=\alpha\lambda-(\lambda-y_1)=y_1-(1-\alpha)\lambda$.


\item[4)] Assume that $y_1\leq (1-\alpha)\lambda$. 
If there exist an $x^*\neq 0$, we have $\|y-\lambda p\|_2\geq |y_1-\lambda|\geq \alpha \lambda$, while~\eqref{eq:opt4L12} implies $\|y-\lambda p\|_2=\alpha\lambda-\|x^*\|_2 < \alpha\lambda$. 
Thus we can not find $x^*\neq 0$. However, we can find $p\in\partial\|0\|_1$ such that $\|y-\lambda p\|_2=\alpha\lambda$. Thus $x^*=0$ is the optimal solution.
\end{enumerate}
\qed
\end{proof}

\begin{remark}When $\alpha=0$, $r_\alpha$ reduces to the $L_1$ norm and the proximal operator  $\mbox{\bf prox}_{\lambda r_\alpha}$ is equivalent to the soft shrinkage $\mathcal{S}_1(y,\lambda)$. When $\alpha>1$, items 3) and 4) show that the optimal solution can not be $0$ for any $y$ and positive $\lambda$.
\end{remark}

\begin{remark}During the preparation of this manuscript, Liu and Pong also provided an analytic solution for the proximal operator for the cases $0\leq \alpha \leq 1$ using a different approach~\cite{liu2016further}. In Lemma~\ref{lemma:prox}, we provide all the solutions for the proximal operator for any $\alpha\geq 0$. 
\end{remark}

\begin{lemma}\label{lemma:assumption_not_satisfy}
Given $y\in \mathbb R^N$, $\lambda>0$, and $\alpha\geq 0$. Let $F(x)=(\|x\|_1-\alpha\|x\|_2) + \frac 1 {2\lambda}  \|x-y\|_2^2$ and $x^*\in\mbox{\bf prox}_{\lambda r_\alpha}(y)$. Then, we have  for any $x\in\mathbb R^N$, 
\begin{equation*}
\textstyle F(x^*)- F(x) \leq \min\left({\alpha\over 2\|x^*\|_2}-{1\over 2\lambda} ,0\right)\|x^*-x\|_2^2.
\end{equation*}
Here, we let $\alpha/0$ be 0 when $\alpha=0$ and $+\infty$ for $\alpha>0$.
\end{lemma}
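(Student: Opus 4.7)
The plan is to write $F(x^*)-F(x)$ as the sum of three differences (from the $L_1$, $-\alpha L_2$, and quadratic pieces), bound each term separately, and combine them. I will split into cases depending on whether $x^*=0$.

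For $x^*=0$ with $\alpha>0$, the convention $\alpha/0=+\infty$ forces $\min\bigl(\tfrac{\alpha}{2\|x^*\|_2}-\tfrac{1}{2\lambda},\,0\bigr)=0$, so the inequality reduces to $F(0)\leq F(x)$, which is just optimality of $x^*$. For $x^*=0$ with $\alpha=0$, the required inequality is $F(0)-F(x)\leq-\tfrac{1}{2\lambda}\|x\|_2^2$; expanding $F(x)-F(0)=\|x\|_1+\tfrac{1}{2\lambda}\|x\|_2^2-\tfrac{1}{\lambda}\langle x,y\rangle$, applying H\"older's inequality, and using the condition $\|y\|_\infty\leq\lambda$ from item 4 of Lemma~\ref{lemma:prox} settles this subcase.

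The substantive case is $x^*\neq 0$. Here the first-order optimality condition~\eqref{eq:opt4L12} from Lemma~\ref{lemma:prox} supplies a subgradient $p\in\partial\|x^*\|_1$ with $p=\tfrac{y-x^*}{\lambda}+\alpha\tfrac{x^*}{\|x^*\|_2}$. I will combine the subgradient inequality for $\|\cdot\|_1$ at $x^*$ with the exact Taylor identity
\begin{equation*}
\tfrac{1}{2\lambda}\|x-y\|_2^2=\tfrac{1}{2\lambda}\|x^*-y\|_2^2+\tfrac{1}{\lambda}\langle x^*-y,x-x^*\rangle+\tfrac{1}{2\lambda}\|x-x^*\|_2^2
\end{equation*}
for the quadratic, then plug in the formula for $p$ so that the linear cross term collapses to $\tfrac{\alpha}{\|x^*\|_2}\langle x^*,x^*-x\rangle$, and finally subtract $\alpha(\|x^*\|_2-\|x\|_2)$. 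These steps deliver
\begin{equation*}
F(x^*)-F(x)\leq\frac{\alpha\bigl(\|x\|_2\|x^*\|_2-\langle x^*,x\rangle\bigr)}{\|x^*\|_2}-\frac{1}{2\lambda}\|x^*-x\|_2^2.
\end{equation*}

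The main obstacle is that the numerator $\|x\|_2\|x^*\|_2-\langle x^*,x\rangle$ is non-negative by Cauchy--Schwarz, so the right-hand side a priori threatens to be positive; this reflects the concavity of $-\alpha\|\cdot\|_2$ contributing the ``wrong'' sign. The resolution is the algebraic identity
\begin{equation*}
\|x-x^*\|_2^2-2\bigl(\|x\|_2\|x^*\|_2-\langle x^*,x\rangle\bigr)=(\|x\|_2-\|x^*\|_2)^2\geq 0,
\end{equation*}
which upgrades Cauchy--Schwarz to the quadratic bound $\|x\|_2\|x^*\|_2-\langle x^*,x\rangle\leq\tfrac{1}{2}\|x-x^*\|_2^2$. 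Substituting yields $F(x^*)-F(x)\leq\bigl(\tfrac{\alpha}{2\|x^*\|_2}-\tfrac{1}{2\lambda}\bigr)\|x-x^*\|_2^2$, and combining with the trivial bound $F(x^*)-F(x)\leq 0$ from global optimality of $x^*$ produces the advertised minimum.
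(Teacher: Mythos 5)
Your proposal is correct and follows essentially the same route as the paper: the same first-order optimality condition supplying $p\in\partial\|x^*\|_1$, the same subgradient-plus-quadratic expansion arriving at $\frac{\alpha}{\|x^*\|_2}\bigl(\|x\|_2\|x^*\|_2-\langle x^*,x\rangle\bigr)-\frac{1}{2\lambda}\|x^*-x\|_2^2$, and the same final bound $\|x\|_2\|x^*\|_2-\langle x^*,x\rangle\leq\frac{1}{2}\|x^*-x\|_2^2$ (which the paper obtains via $\|x\|_2\|x^*\|_2\leq\frac{1}{2}\|x\|_2^2+\frac{1}{2}\|x^*\|_2^2$ and you via the equivalent identity with $(\|x\|_2-\|x^*\|_2)^2$), together with the same treatment of the $x^*=0$, $\alpha=0$ subcase. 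The only cosmetic difference is that you split cases by whether $x^*=0$ while the paper splits by the size of $\|y\|_\infty$, which is equivalent by Lemma~\ref{lemma:prox}.
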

\begin{proof} When $\|y\|_\infty>(1-\alpha)\lambda$, Lemma~\ref{lemma:prox} guarantees that $\mbox{\bf prox}_{\lambda r_\alpha}(y)\neq 0$, i.e., $x^*\neq 0$. 
The optimality condition of $x^*$ reads $p={1\over\lambda}y-\left({1\over\lambda}-\frac {\alpha} {\|x^*\|_2}\right)x^*\in\partial \|x^*\|_1,$ 
then we have
\begin{align*}
\textstyle F(x^*)-F(x)\leq &\textstyle \langle p, x^*-x\rangle +\alpha\|x\|_2-\alpha \|x^*\|_2 +{1\over 2\lambda} \|x^* -y\|_2^2  -{1\over 2\lambda} \|x -y\|_2^2\\
= &\textstyle \left\langle  {\alpha x^*\over \|x^*\|_2}+{y-x^*\over \lambda}, x^*-x\right\rangle +\alpha\|x\|_2-\alpha \|x^*\|_2 \\
 &\textstyle -{1\over 2\lambda}\|x^*-x\|_2^2+{1\over \lambda} \langle x^*-y,x^*-x\rangle\\
= &\textstyle -\left\langle  {\alpha x^*\over \|x^*\|_2}, x\right\rangle +\alpha\|x\|_2-{1\over 2\lambda} \|x^*-x\|_2^2 \\
= &\textstyle {\alpha\over \|x^*\|_2}\left(-\langle x^*, x\rangle +\|x\|_2\|x^*\|_2 \right)-{1\over 2\lambda} \|x^*-x\|_2^2 \\
\leq &\textstyle {\alpha\over \|x^*\|_2}\left(-\langle x^*, x\rangle +{1\over 2}\|x\|_2^2+{1\over 2}\|x^*\|_2^2 \right)-{1\over 2\lambda} \|x^*-x\|_2^2 \\
= &\textstyle \left({\alpha\over 2\|x^*\|_2}-{1\over 2\lambda}\right) \|x^*-x\|_2^2.
\end{align*}
Here, the first inequality comes from $p\in\partial \|x^*\|_1$, and the last inequality comes from the Cauchy-Schwartz inequality. 

When $\|y\|_\infty\leq(1-\alpha)\lambda$, Lemma 1 shows that $x^*=\mbox{\bf prox}_{\lambda r_\alpha}(y)= 0$ and $F(x^*)-F(x)\leq 0$. Furthermore, if $\alpha=0$, we have 
\begin{align*}
 F(x^*)-F(x)= &\textstyle{1\over2\lambda}\|y\|^2-\|x\|_1-{1\over2\lambda}\|x-y\|^2\\
\leq &\textstyle{1\over\lambda}\langle x,y\rangle -\|x\|_1-{1\over2\lambda}\|x\|^2 \leq -{1\over2\lambda}\|x\|^2,
\end{align*}
where the last inequality holds because $\|y\|_\infty\leq \lambda$.
\qed
\end{proof}

\section{Forward-Backward Splitting}\label{sect:fbs}
Each iteration of forward-backward splitting applies the gradient descent of $l(x)$ followed by a proximal operator. It can be expressed as follows:
\begin{align*}
x^{k+1}\in\mbox{\bf prox}_{\lambda r_\alpha}(x^k-\lambda \nabla l(x^k)),
\end{align*}
where $\lambda>0$ is the stepsize. To prove the convergence, we make the following assumptions, which are standard in compressive sensing and image processing. 
\begin{assumption}\label{assume:Lipschitz}
	$l(x)$ has a Lipschitz continuous gradient, i.e., there exists $L>0$ such that 
	\begin{align*}
	\|\nabla l(x)-\nabla l(y)\|_2\leq L\|x-y\|_2\quad \forall~x,y.
	\end{align*}
\end{assumption}

\begin{assumption}\label{assume:coercive}
The objective function $r_\alpha (x)+l(x)$ is coercive, i.e., $r_\alpha (x)+l(x)\rightarrow +\infty$ when $\|x\|_2\rightarrow +\infty$.
\end{assumption}

The next theorem establishes the convergence of the FBS algorithm based on these two assumptions together with appropriately chosen stepsizes.

\begin{theorem}\label{thm:fbs}
If Assumptions~\ref{assume:Lipschitz}-\ref{assume:coercive}  are satisfied and $\lambda <1/L$, then the objective value is decreasing and there exists a subsequence that converges to a stationary point. 
In addition, any limit point is a stationary point of $E(x)$ defined in~\eqref{eq:main_pro}.
\end{theorem}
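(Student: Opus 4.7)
The plan is to follow the standard template for nonconvex proximal-gradient methods: first derive a sufficient-decrease inequality for $E$, then use coercivity to extract a bounded convergent subsequence with vanishing step size, and finally pass to the limit in the proximal-map optimality condition to identify limit points as stationary points of $E$.

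For the descent step I would apply Lemma~\ref{lemma:assumption_not_satisfy} with $y = x^k-\lambda\nabla l(x^k)$, $x^* = x^{k+1}$, and $x = x^k$. Discarding the nonpositive ``$\min$'' factor and expanding the quadratic $\tfrac{1}{2\lambda}\|\cdot-y\|_2^2$ terms gives
\begin{equation*}
r_\alpha(x^{k+1}) - r_\alpha(x^k) + \langle \nabla l(x^k), x^{k+1}-x^k\rangle + \tfrac{1}{2\lambda}\|x^{k+1}-x^k\|_2^2 \leq 0.
\end{equation*}
Adding the standard descent lemma $l(x^{k+1})-l(x^k) \leq \langle \nabla l(x^k), x^{k+1}-x^k\rangle + \tfrac{L}{2}\|x^{k+1}-x^k\|_2^2$ from Assumption~\ref{assume:Lipschitz} collapses the gradient inner products and yields
\begin{equation*}
E(x^{k+1}) - E(x^k) \leq -\tfrac{1-\lambda L}{2\lambda}\|x^{k+1}-x^k\|_2^2,
\end{equation*}
which is strictly negative whenever $\lambda<1/L$ and $x^{k+1}\neq x^k$, giving the monotone decrease claim.

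Monotone decrease combined with Assumption~\ref{assume:coercive} confines $\{x^k\}$ to a sublevel set of $E$, so the sequence is bounded and Bolzano--Weierstrass supplies a convergent subsequence $x^{k_j}\to x^\infty$. Telescoping the decrease inequality simultaneously yields $\sum_k \|x^{k+1}-x^k\|_2^2 < \infty$, hence $\|x^{k+1}-x^k\|_2\to 0$, which ensures $x^{k_j+1}\to x^\infty$ as well and will let us pass to the limit in one step of the iteration.

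The main obstacle is then to show that any such $x^\infty$ is a stationary point of $E$. When $x^\infty\neq 0$, for $j$ large enough $x^{k_j+1}\neq 0$ by continuity, and the first-order optimality condition for the proximal step (the analogue of~\eqref{eq:opt4L12} derived at the start of the proof of Lemma~\ref{lemma:prox}) reads
\begin{equation*}
\tfrac{x^{k_j}-x^{k_j+1}}{\lambda} - \nabla l(x^{k_j}) + \alpha\tfrac{x^{k_j+1}}{\|x^{k_j+1}\|_2} \in \partial\|x^{k_j+1}\|_1.
\end{equation*}
Letting $j\to\infty$, using continuity of $\nabla l$, $x^{k_j+1}-x^{k_j}\to 0$, and closedness of the graph of $\partial\|\cdot\|_1$, yields $0\in\nabla l(x^\infty) + \partial\|x^\infty\|_1 - \alpha x^\infty/\|x^\infty\|_2$, which is the stationarity condition. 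The delicate case is $x^\infty = 0$, where the $L_2$ direction in the optimality condition is ill-defined; here I would invoke Lemma~\ref{lemma:prox}(3)--(4) to argue that the limiting argument $-\lambda\nabla l(0)$ of the proximal map must satisfy $\|\nabla l(0)\|_\infty \leq 1-\alpha$ (for $\alpha\leq 1$), or the corresponding condition from case 3, which translates directly into stationarity of $E$ at the origin in the limiting-subdifferential sense. This bookkeeping around the origin, forced by the nonconvexity of $-\alpha\|\cdot\|_2$, is the chief technical hurdle of the proof.
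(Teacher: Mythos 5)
Your proposal is correct and follows essentially the same route as the paper: the same substitution into Lemma~\ref{lemma:assumption_not_satisfy} combined with the descent lemma from Assumption~\ref{assume:Lipschitz} to get the sufficient decrease $E(x^{k+1})-E(x^k)\leq -\left(\frac{1}{2\lambda}-\frac{L}{2}\right)\|x^{k+1}-x^k\|_2^2$, then coercivity, summability of $\|x^{k+1}-x^k\|_2^2$, and extraction of a convergent subsequence. The only difference is that you spell out the limit-passing step (via the prox optimality condition and closedness of $\partial\|\cdot\|_1$, with separate bookkeeping at the origin) in more detail than the paper, which simply asserts $x^*\in\mbox{\bf prox}_{\lambda r_\alpha}(x^*-\lambda\nabla l(x^*))$; this is added rigor, not a different argument.
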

\begin{proof} Simple calculations give that 
\begin{align}
	   &\textstyle r_\alpha(x^{k+1}) + l(x^{k+1}) + \left({1\over 2\lambda}-{L\over 2}\right)\|x^{k+1}-x^k\|_2^2\nonumber\\
\leq &\textstyle r_\alpha(x^{k+1}) + l(x^{k}) +\left\langle \nabla l(x^k),x^{k+1}-x^k\right\rangle +{L\over 2}\|x^{k+1}-x^k\|_2^2 \nonumber\\
     &\textstyle + \left({1\over 2\lambda}-{L\over 2}\right)\|x^{k+1}-x^k\|_2^2\nonumber\\
=    &\textstyle r_\alpha(x^{k+1}) + l(x^{k})  + {1\over 2\lambda}\|x^{k+1}-x^k+\lambda \nabla l(x^k)\|_2^2-{1\over 2\lambda}\|\lambda \nabla l(x^k)\|_2^2\nonumber\\
\leq &\textstyle r_\alpha(x^k)  + {1\over 2\lambda}\|\lambda \nabla l(x^k)\|_2^2+ \min\left({\alpha\over 2\|x^{k+1}\|_2}-{1\over 2\lambda} ,0\right)\|x^{k+1}-x^k\|_2^2 \nonumber\\
     &\textstyle + l(x^{k})-{1\over 2\lambda}\|\lambda \nabla l(x^k)\|_2^2 \nonumber\\
\label{eqn:pd_ineq}
=    &\textstyle r_\alpha(x^k) + l(x^{k}) + \min\left({\alpha\over 2\|x^{k+1}\|_2}-{1\over 2\lambda} ,0\right)\|x^{k+1}-x^k\|_2^2.
\end{align}
The first inequality comes from Assumption~\ref{assume:Lipschitz}, and the second inequality comes from Lemma~\ref{lemma:assumption_not_satisfy} with $y$ replaced by $x^k-\lambda\nabla l(x^k)$ and $x$ replaced by $x^{k}$. Therefore, the function value $r_\alpha(x)+l(x)$ is decreasing; in fact, we have
\begin{eqnarray}\label{eq:fbs_suf_dec}
E(x^k)-E(x^{k+1})&\geq&\textstyle   \max\left({1\over \lambda}-{L\over 2}-{\alpha\over 2\|x^{k+1}\|_2} ,{1\over 2\lambda}-{L\over 2}\right)\|x^{k+1}-x^k\|_2^2\\
&\geq&\textstyle \left({1\over 2\lambda}-{L\over 2}\right)\|x^{k+1}-x^k\|_2^2.\nonumber
\end{eqnarray}

Due to the coerciveness of the objective function (Assumption~\ref{assume:coercive}), we have that the sequence $\{x^k\}_{k=1}^\infty$ is bounded. In addition, we have $\sum_{k=0}^{+\infty}\|x^{k+1}-x^k\|_2^2<+\infty$, which implies $x^{k+1}-x^k\rightarrow 0$. Therefore, there exists a convergent subsequence $x^{k_i}$. Let $x^{k_i}\rightarrow x^*$, then we have $x^{k_i+1}\rightarrow x^*$ and $x^*=\mbox{\bf prox}_{\lambda r_\alpha}(x^*-\lambda \nabla l(x^*))$, i.e., $x^*$ is a stationary point. \qed
\end{proof}

\begin{remark}
When $\alpha=0$, the algorithm is identical to the iterative soft thresholding algorithm (ISTA)~\cite{Beck2009}, and the stepsize can be chosen as $\lambda<2/L$  since~\eqref{eq:fbs_suf_dec} becomes
$$\textstyle r_\alpha(x^k) + l(x^{k}) - r_\alpha(x^{k+1}) + l(x^{k+1})\geq  \left({1\over \lambda}-{L\over 2} \right)\|x^{k+1}-x^k\|_2^2.$$ 
When $\alpha>0$, if we know a lower bound of $\|x^k\|_2$, we may choose a larger stepsize to speed up the convergence based on the inequality~\eqref{eqn:pd_ineq}.  
\end{remark}

\begin{remark}The result in Theorem~\ref{thm:fbs} holds for any regularization $r(x)$, and the proof follows from replacing $\min\left({\alpha\over 2\|x^{k+1}\|_2}-{1\over 2\lambda} ,0\right)$ in~\eqref{eqn:pd_ineq} by $0$~\cite[Proposition 2.1]{bredies_minimization_2015}.
\end{remark}

Since the  main problem~\eqref{eq:main_pro} is nonconvex, there exist many stationary points. We are interested in those stationary points that are also fixed points of the FBS operator because a global solution is a fixed point of the operator and FBS converges to a fixed point. In fact, we have the following property for global minimizers to be fixed points of the FBS algorithm for all parameters $\lambda<1/L$.

\begin{lemma}\label{lemma:nece}
[Necessary conditions for global minimizers] Each global minimizer $x^*$ of~\eqref{eq:main_pro} satisfies:
\begin{itemize}
\item[1)] $x^{*}\in\mbox{\bf prox}_{\lambda r_\alpha}(x^*-\lambda \nabla l(x^*))$ for all positive $\lambda <1/L$.
\item[2)] If $x^*=0$, then we have $\|\nabla l(0)\|_\infty \leq 1-\alpha$. In addition, we have $\nabla l(0)=0$ for $\alpha=1$ and $x^*=0$ does not exist for $\alpha >1$.
\item[3)] If $\|x^*\|_2\geq \alpha/L$,let $\Lambda=\{i,x^*_i\neq 0\}$. Then $x^*_\Lambda$ is in the same direction of $\nabla_\Lambda l(x^*)+\mbox{sign}(x^*_\Lambda)$ and $\|\nabla_\Lambda l(x^*)+\mbox{sign}(x^*_\Lambda)\|_2 = \alpha$. 
\item[4)] If $\|x^*\|_2< \alpha/L$ and $x^*\neq0$, then $x^*$ is 1-sparse, i.e., the number of nonzero components is 1. In addition, we have $\nabla_i l(x^*)=(\alpha -1)\mbox{sign}(x^*_i)$ for $x^*_i\neq 0$ and $|\nabla_i l(x^*)|\leq \min\{0,1-\alpha+\|x^*\|_\infty L\}$ for $x^*_i= 0$. 
\end{itemize}
\end{lemma}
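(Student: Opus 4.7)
The plan is to derive all four items from a single master fact: a global minimizer $x^*$ of $E$ must itself lie in the proximal set $\mbox{\bf prox}_{\lambda r_\alpha}(x^*-\lambda\nabla l(x^*))$ for every $\lambda<1/L$. To prove this first item, I would set $\phi(x):=r_\alpha(x)+\tfrac{1}{2\lambda}\|x-(x^*-\lambda\nabla l(x^*))\|_2^2$ and show $\phi(x^*)\leq \phi(x)$ by a standard completion-of-square argument: expand the quadratic, and use the descent lemma coming from Assumption~\ref{assume:Lipschitz} to replace $\langle\nabla l(x^*),x-x^*\rangle$ by $l(x)-l(x^*)-\tfrac{L}{2}\|x-x^*\|_2^2$. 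After rearrangement, $\phi(x)-\phi(x^*)$ is bounded below by $[E(x)-E(x^*)]+\tfrac{1}{2}(\tfrac{1}{\lambda}-L)\|x-x^*\|_2^2$, which is non-negative because $x^*$ is a global minimizer and $\lambda<1/L$.

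Items~2) and~3) then fall out by matching $x^*\in\mbox{\bf prox}_{\lambda r_\alpha}(x^*-\lambda\nabla l(x^*))$ against the case analysis of Lemma~\ref{lemma:prox}. For item~2), $x^*=0$ can only be produced by case~4, which requires $\|-\lambda\nabla l(0)\|_\infty\leq(1-\alpha)\lambda$, i.e., $\|\nabla l(0)\|_\infty\leq 1-\alpha$; specializing to $\alpha=1$ and $\alpha>1$ yields the remaining two consequences. For item~3), the hypothesis $\|x^*\|_2\geq\alpha/L>\alpha\lambda$ eliminates cases~2 and~3 of Lemma~\ref{lemma:prox} (both of which would force $\|x^*\|_2\leq\alpha\lambda$), so I would work directly from the optimality condition~\eqref{eq:opt4L12} for $x^*\neq 0$. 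Substituting $y=x^*-\lambda\nabla l(x^*)$ and cancelling the $x^*/\lambda$ terms collapses it to $\nabla l(x^*)+p=\tfrac{\alpha}{\|x^*\|_2}x^*$ with $p\in\partial\|x^*\|_1$. Restricting to $\Lambda$, where $p_\Lambda=\mbox{sign}(x^*_\Lambda)$, gives the colinearity statement, and taking $\ell_2$-norms on both sides delivers the identity $\|\nabla_\Lambda l(x^*)+\mbox{sign}(x^*_\Lambda)\|_2=\alpha$.

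For item~4), the inequality $\|x^*\|_2<\alpha/L$ is exactly what I need to pick some $\lambda\in(\|x^*\|_2/\alpha,\ 1/L)$, so that $\|x^*\|_2<\alpha\lambda$ while item~1) continues to apply. Given $x^*\neq 0$, this eliminates cases~1, 2, and~4 of Lemma~\ref{lemma:prox} by the norm comparisons $\|x^*\|_2>\alpha\lambda$, $\|x^*\|_2=\alpha\lambda$, and $x^*=0$, respectively, leaving only case~3 and thereby forcing 1-sparsity. The relation $\nabla_i l(x^*)=(\alpha-1)\mbox{sign}(x^*_i)$ at the unique nonzero coordinate then follows by evaluating the identity $\nabla l(x^*)+p=\tfrac{\alpha}{\|x^*\|_2}x^*$ at that coordinate, using $x^*_i=\|x^*\|_\infty\mbox{sign}(x^*_i)$. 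For the off-support indices, I would combine $|p_i|\leq 1$ with the case~3 strict inequality $|y_i|<\|y\|_\infty$, compute $\|y\|_\infty$ explicitly at the support index from the relation just obtained, and then let $\lambda\uparrow 1/L$. The main obstacle I expect is precisely this last step: converting the $\lambda$-dependent bound on $|\nabla_i l(x^*)|$ into a clean inequality depending only on $\|x^*\|_\infty$, $\alpha$, and $L$ requires careful bookkeeping of how $\|y\|_\infty$ inherits the support-index gradient, and the $\lambda$-limit must be taken uniformly across all off-support indices.
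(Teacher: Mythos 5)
Your proposal is correct and follows essentially the same route as the paper: item 1) is obtained from global minimality plus the descent lemma (the paper invokes its sufficient-decrease inequality~\eqref{eq:fbs_suf_dec}, which encodes the same completion-of-squares computation you write out), and items 2)--4) are then read off the case analysis of Lemma~\ref{lemma:prox}, including the same choice of $\lambda\in\left(\|x^*\|_2/\alpha,\,1/L\right)$ to force the 1-sparse case and the same limiting argument in $\lambda$ for the off-support gradient bound. The only cosmetic difference is that for item 3) you work from the stationarity identity $\nabla l(x^*)+p=\alpha x^*/\|x^*\|_2$ rather than from the explicit soft-shrinkage formula of case 1) of Lemma~\ref{lemma:prox}; the two are equivalent.
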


\begin{proof} 
Item 1) follows from~\eqref{eq:fbs_suf_dec} by replacing $x^k$ with $x^*$. The function value can not decrease because $x^*$ is a global minimizer. Thus $x^{k+1}=x^*$, and $x^*$ is a fixed point of the forward-backward operator. 
Let $x^*=0$, then item 1) and Lemma~\ref{lemma:prox} together give us item 2). 

For items 3) and 4), we denote $y(\lambda)=x^*-\lambda\nabla l(x^*)$ and have $\|y(\lambda)\|_\infty >\lambda$ for small positive $\lambda$ because $x^*\neq 0$. 
If $\|x^*\|_2\geq \alpha/L$, then from Lemma~\ref{lemma:prox}, we have that $\|x^*\|_2\geq \alpha\lambda$ and $\|y(\lambda)\|\geq \lambda$ for all $\lambda <1/L$.
Therefore, we have $x^*=S_1(y,\lambda)(\|S_1(y,\lambda)\|_2+\alpha \lambda )/\|S_1(y,\lambda)\|_2$ for all $\lambda<1/L$ from Lemma~\ref{lemma:prox}. 
$S_1(y,\lambda)$ is in the same direction of $x^*$, and thus $x^*_\Lambda$ is in the same direction of $\nabla_\Lambda l(x^*)+\mbox{sign}(x^*_\Lambda)$. 
In addition, $\|\nabla_\Lambda l(x^*)+\mbox{sign}(x^*_\Lambda)\|_2 = \alpha$. 
If $\|x^*\|_2< \alpha/L$, then from Lemma~\ref{lemma:prox}, we have that $x^*$ is 1-sparse. We also have $\nabla_i l(x^*)=(\alpha -1)\mbox{sign}(x^*_i)$ for $x^*_i\neq 0$, which is from Item 3). 
For $x^*_i=0$, we have $|\lambda \nabla_i l(x^*)|\leq \left|\|x^*\|_\infty-\lambda (\alpha-1)\right|$ for all $\lambda<1/L$. Thus $| \nabla_il(x^*)|\leq \left|\|x^*\|_\infty/\lambda- (\alpha-1)\right|$. When $\alpha<1$, we have $| \nabla_i l(x^*)|\leq 1-\alpha +\|x^*\|_\infty L$. When $\alpha>1$, if $1-\alpha+\|x^*\|_\infty L<0$, then we can find $\bar\lambda<1/L$ such that $1-\alpha+\|x^*\|_\infty/\bar\lambda=0$ and $|\nabla_i l(x^*) |\leq 0$, otherwise, we have $| \nabla_i l(x^*)|\leq 1-\alpha +\|x^*\|_\infty L$. \qed
\end{proof}


The following example shows that  FBS tends to select a sparser solution, i.e., the fixed points of the forward-backward operator may be sparser than other stationary points.

\begin{example}\label{example:FBS}
Let $N=3$ and the objective function be 
\begin{align*}
\textstyle\|x\|_1-\|x\|_2+ {1\over 2}\left(x_1+x_2-1.2+1/\sqrt{2}\right)^2 +{1\over 2}\left(x_2+x_3-1.2+1/\sqrt{2}\right)^2.
\end{align*}
We can verify that $\left(0,1.2-{1/\sqrt{2}},0\right)$ is a global minimizer. In addition, we get $(0.2,0,0.2)$, $\left(1.2-{1/\sqrt{2}},0,0\right)$, $\left(0,0,1.2-{1/\sqrt{2}}\right)$, and $(4/5-2/9-\sqrt{2}/3,2/5-1/9-\sqrt{2}/6,4/5-2/9-\sqrt{2}/3)$ are stationary points. Let $x^0=(0,0,0)$, we have that $x^*=\left(0,1.2-{1/\sqrt{2}},0\right)$. If we let $x^0=(0.2,0,0.2)$, we will have that $x^*=\left(1.2-{1/\sqrt{2}},0,0\right)$ (or $\left(0,0,1.2-{1/\sqrt{2}}\right)$), for stepsize $\lambda > 0.2\sqrt{2}\approx 0.2828$. Similarly,  if we let $x^0=(4/5-2/9-\sqrt{2}/3,2/5-1/9-\sqrt{2}/6,4/5-2/9-\sqrt{2}/3)$, we will have that $x^*=\left(0,1.2-{1/\sqrt{2}},0\right)$ for $\lambda>6/5-1/3-\sqrt{2}/2\approx 0.1596$. For both stationary points that are not 1-sparse, we can verify that their $L_2$ norms are less than $1/L=1/3$. Therefore, Lemma~\ref{lemma:nece} shows that they are not fixed points of FBS for all $\lambda<1/L$ and hence they are not global solutions. \end{example}

We further consider an accelerated proximal gradient method~\cite{li2015accelerated} to speed up the convergence of FBS. In particular, the algorithm goes as follows,
	\begin{subequations}
		\begin{align}
	&	y^k =  x^k + \frac{t^{k-1}}{t^k} (z^k-x^k) + \frac {t^{k-1}-1}{t^k} (x^k-x^{k-1}),\\
	&	z^{k+1} \in  \mbox{\bf prox}_{\lambda r_\alpha}(y^k-\lambda \nabla l(y^k)),\\
	&	v^{k+1} \in   \mbox{\bf prox}_{\lambda r_\alpha}(x^k-\lambda \nabla l(x^k)),\\
	&	t^{k+1} = \frac{\sqrt{4(t^k)^2+1}+1}{2},\\
	&	x^{k+1} =  
		\left\{\begin{array}{ll}
		z^{k+1}, & \mbox{if } E(z^{k+1})<E(v^{k+1}),\\
		v^{k+1}, & \mbox{otherwise.}
		\end{array}
		\right.
		\end{align}
	\end{subequations}
It was shown in~\cite{li2015accelerated} that the algorithm converges to a critical point if $\lambda<1/L$. We call this algorithm FBS throughout the numerical section. 

\section{Alternating Direction Method of Multipliers}\label{sect:admm}
In this section, we consider a general regularization $r(x)$ with an assumption that it is coercive; it includes $r_\alpha(x)$ as a special case. We  apply the ADMM to solve the unconstrained problem~\eqref{eq:main_pro}. In order to do this, we introduce an auxiliary  variable $y$ such that~\eqref{eq:main_pro} is equivalent to the following constrained minimization problem:
\begin{align}\label{eq:ADMM_pro}
\Min_{x,y}~r(x)+l(y) \  \mbox{ subject to }\ x=y.
\end{align} 
Then the augmented Lagrangian is 
\begin{align*}
L_\delta(x,y,u) = r(x)+l(y)+\delta\langle u,x-y\rangle +{\delta\over2}\|x-y\|_2^2,
\end{align*}
and the ADMM iteration is:
\begin{subequations}
\begin{align}
x^{k+1} \in & \argmin_x~L_\delta(x,y^k,u^k)=\argmin_x~r(x)+{\delta\over 2}\|x-y^k+u^k\|_2^2,\label{eq:ADMM_x}\\
y^{k+1} = & \argmin_y~L_\delta(x^{k+1},y,u^k)=\argmin_y~l(y)+{\delta\over 2}\|x^{k+1}-y+u^k\|_2^2,\label{eq:ADMM_y}\\
u^{k+1} = & u^k+x^{k+1}-y^{k+1}.\label{eq:ADMM_u}
\end{align}
\end{subequations}
Note that the optimality condition of~\eqref{eq:ADMM_y} guarantees that $0= \nabla l(y^{k+1})+\delta (y^{k+1}-x^{k+1}-u^k)$ and $\delta u^{k+1}=\nabla l(y^{k+1})$.

\begin{lemma}\label{lemma:ADMM} 
Let $(x^k,y^k,u^k)$ be the sequence generated by ADMM. We have the following statements:
\begin{itemize}
\item[1)] If $l(x)$ satisfies Assumption~\ref{assume:Lipschitz}, then we have
\begin{align}
\textstyle L_\delta(x^{k+1},y^{k+1},u^{k+1}) - L_\delta(x^{k+1},y^k,u^k)
\leq\left({3L\over 2} +{L^2\over \delta} -{\delta\over2}\right)\|y^{k+1}-y^k\|_2^2, \label{eq:suf_dec}
\end{align}
and, in addition, if $l(x)$ is convex,
\begin{align}
\label{eq:suf_dec_convex}
\textstyle L_\delta(x^{k+1},y^{k+1},u^{k+1}) - L_\delta(x^{k+1},y^k,u^k)
\leq \left({L^2\over \delta}-{\delta\over 2}\right)\|y^{k+1}-y^k\|_2^2.
\end{align}
\item[2)] If $l(x)$ satisfies Assumption~\ref{assume:Lipschitz}, then there exists $p\in\partial_x L_\delta(x^{k+1},y^{k+1},u^{k+1})$, where $\partial_x L_\delta$ is the set of general subgradients of $L$ with respect to $x$ for fixed $y$ and $u$~\cite[Deﬁnition 8.3]{rockafellar_variational_2009}
such that 
\begin{align}\label{eq:sub_diff_bound}
&\|p\|_2+\|\nabla_y L_\delta(x^{k+1},y^{k+1},u^{k+1})\|_2+\|\nabla_u L_\delta(x^{k+1},y^{k+1},u^{k+1})\|_2\\
\leq& (3L+\delta)\|y^{k+1}-y^k\|_2.\nonumber
\end{align}
\end{itemize}
\end{lemma}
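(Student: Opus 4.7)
My plan is to treat the two parts separately, exploiting the identity $\delta u^{k+1}=\nabla l(y^{k+1})$ noted after the ADMM update (which by induction also gives $\delta u^k=\nabla l(y^k)$ for $k\geq 1$). For part 1), I would insert the ``mixed'' iterate $L_\delta(x^{k+1},y^{k+1},u^k)$ and decompose the target difference into a \emph{dual} piece $L_\delta(x^{k+1},y^{k+1},u^{k+1}) - L_\delta(x^{k+1},y^{k+1},u^k)$ and a \emph{primal-$y$} piece $L_\delta(x^{k+1},y^{k+1},u^k) - L_\delta(x^{k+1},y^k,u^k)$. The dual piece collapses to $\delta\|u^{k+1}-u^k\|_2^2=\frac{1}{\delta}\|\nabla l(y^{k+1})-\nabla l(y^k)\|_2^2\leq \frac{L^2}{\delta}\|y^{k+1}-y^k\|_2^2$ after using $\delta(u^{k+1}-u^k)=\nabla l(y^{k+1})-\nabla l(y^k)$ and Assumption~\ref{assume:Lipschitz}. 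The primal-$y$ piece is handled differently in the two regimes: when $l$ is convex, the $y$-subproblem is $\delta$-strongly convex, so exploiting that $y^{k+1}$ is its global minimizer yields an immediate $-\frac{\delta}{2}\|y^{k+1}-y^k\|_2^2$ upper bound, which combined with the dual piece gives~\eqref{eq:suf_dec_convex}. In the nonconvex case I would use the descent lemma for $l$ together with the $y$-update optimality $\delta(y^{k+1}-x^{k+1}-u^k)=-\nabla l(y^{k+1})$ to cancel the cross term arising from expanding the quadratic, and then bound the residual $\langle \nabla l(y^k)-\nabla l(y^{k+1}),\,y^{k+1}-y^k\rangle$ by Cauchy-Schwartz and Lipschitz continuity; this costs an extra $L/2+L=3L/2$ and produces~\eqref{eq:suf_dec}.

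For part 2), I would compute each of the three components of $\partial L_\delta$ directly from its definition. The gradients with respect to $y$ and $u$ at the new iterate both collapse to $\pm\delta(u^{k+1}-u^k)$ (using the $y$-update optimality to annihilate the $\nabla l(y^{k+1})-\delta u^{k+1}$ contribution), and hence both are bounded by $L\|y^{k+1}-y^k\|_2$ by the same Lipschitz-plus-multiplier argument used for the dual piece above. For the only nonsmooth component, I would use the $x$-update optimality to pick $q\in\partial r(x^{k+1})$ with $q=-\delta(x^{k+1}-y^k+u^k)$, and then form $p=q+\delta u^{k+1}+\delta(x^{k+1}-y^{k+1})\in\partial_x L_\delta(x^{k+1},y^{k+1},u^{k+1})$; cancellations leave $p=\delta(y^k-y^{k+1})+\delta(u^{k+1}-u^k)$, whose norm is at most $(\delta+L)\|y^{k+1}-y^k\|_2$. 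Summing the three contributions gives the $(3L+\delta)\|y^{k+1}-y^k\|_2$ bound in~\eqref{eq:sub_diff_bound}.

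The main obstacle will be the nonconvex primal-$y$ step in part 1), since there is no strong convexity of the inner minimization to exploit; one must instead recover a descent-like inequality by combining the descent lemma with the $y$-optimality to cancel the leading cross term, paying the $3L/2$ penalty that distinguishes the nonconvex bound from the convex one. A secondary subtlety worth flagging is that the identity $\delta u^k=\nabla l(y^k)$ is used repeatedly in both parts and must be established inductively from the $y$-update; this is automatic after the first iteration regardless of the initial $u^0$.
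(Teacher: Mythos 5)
Your proposal is correct and follows essentially the same route as the paper: both rest on the identities $\delta u^{k+1}=\nabla l(y^{k+1})$ and $x^{k+1}-y^{k+1}=u^{k+1}-u^k$, the descent lemma for the nonconvex bound in part 1), and the same subgradient bookkeeping in part 2). The only cosmetic differences are that you make the split through the mixed iterate $L_\delta(x^{k+1},y^{k+1},u^k)$ explicit, invoke $\delta$-strong convexity of the $y$-subproblem rather than convexity of $l$ directly, and bound the cross term by Cauchy--Schwarz where the paper uses Young's inequality with $c=\delta/(2L)$ --- all of which yield identical constants.
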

\begin{proof}
1): From~\eqref{eq:ADMM_x}, we have 
\begin{align}
L_\delta(x^{k+1},y^k,u^k) - L_\delta(x^k,y^k,u^k)\leq 0.\label{eq:suf_dec_a}
\end{align}
From~\eqref{eq:ADMM_y} and~\eqref{eq:ADMM_u}, we derive
\begin{align}\label{for:equ1}
&L_\delta(x^{k+1},y^{k+1},u^{k+1}) - L_\delta(x^{k+1},y^k,u^k) \nonumber\\
=&l(y^{k+1})+\delta\langle u^{k+1},x^{k+1}-y^{k+1}\rangle +{\delta\over2}\|x^{k+1}-y^{k+1}\|_2^2\nonumber\\
 &-l(y^{k})-\delta\langle u^{k},x^{k+1}-y^{k}\rangle -{\delta\over2}\|x^{k+1}-y^{k}\|_2^2\nonumber\\
=&l(y^{k+1})-l(y^{k})-\delta\langle u^k,y^{k+1}-y^{k}\rangle \nonumber\\
& +\delta\|u^{k+1}-u^k\|_2^2 -{\delta\over2}\|y^{k+1}-y^k\|_2^2-\delta\langle u^{k+1}-u^k,y^{k+1}-y^{k}\rangle.
\end{align}
Assumption~\ref{assume:Lipschitz} gives us 
\begin{align*}
\textstyle l(y^{k+1})-l(y^{k})-\delta\langle u^k,y^{k+1}-y^{k}\rangle  \leq {L\over 2}\|y^{k+1}-y^k\|_2^2,
\end{align*}
and, by Young's inequality, we have 
\begin{align*}
-\delta\langle u^{k+1}-u^k,y^{k+1}-y^{k}\rangle\leq {c \delta}\|u^{k+1}-u^k\|_2^2 +{\delta\over 4c}\|y^{k+1}-y^k\|_2^2
\end{align*}
for any positive $c$ (we will decide $c$ later). Therefore we have 
\begin{align*}
&L_\delta(x^{k+1},y^{k+1},u^{k+1}) - L_\delta(x^{k+1},y^k,u^k) \\
\leq & {L\over 2}\|y^{k+1}-y^k\|_2^2 +{(1+ c) \delta}\|u^{k+1}-u^k\|_2^2 -\left({\delta\over2}-{\delta\over 4c}\right)\|y^{k+1}-y^k\|_2^2\\
\leq &{L\over 2}\|y^{k+1}-y^k\|_2^2 +{(1+ c)L^2\over \delta}\|y^{k+1}-y^k\|_2^2 -\left({\delta\over2}-{\delta\over 4c}\right)\|y^{k+1}-y^k\|_2^2.
\end{align*}
Let $c={\delta/(2L)}$, and we obtain:
\begin{align}
\textstyle L_\delta(x^{k+1},y^{k+1},u^{k+1}) - L_\delta(x^{k+1},y^k,u^k)
\leq\left({3L\over 2} +{L^2\over \delta} -{\delta\over2}\right)\|y^{k+1}-y^k\|_2^2.
\label{eq:suf_dec_b}\end{align}
Combining~\eqref{eq:suf_dec_a} and~\eqref{eq:suf_dec_b}, we get~\eqref{eq:suf_dec}.
If, in addition, $l(x)$ is convex, we have, from~\eqref{for:equ1}, that
\begin{align}\label{eq:suf_dec_convex_b}
&L_\delta(x^{k+1},y^{k+1},u^{k+1}) - L_\delta(x^{k+1},y^k,u^k)\nonumber\\
=&l(y^{k+1})-l(y^{k})-\delta\langle u^{k+1},y^{k+1}-y^{k}\rangle  +\delta\|u^{k+1}-u^k\|_2^2 -{\delta\over2}\|y^{k+1}-y^k\|_2^2\nonumber\\
\leq&\textstyle   \delta\|u^{k+1}-u^k\|_2^2 -{\delta\over2}\|y^{k+1}-y^k\|_2^2
\leq \left({L^2\over \delta}-{\delta\over 2}\right)\|y^{k+1}-y^k\|_2^2.
\end{align}
Thus~\eqref{eq:suf_dec_convex} is obtained by combining~\eqref{eq:suf_dec_a} and~\eqref{eq:suf_dec_convex_b}.

2) It follows from the optimality condition of~\eqref{eq:ADMM_x} that there exists $q\in \partial r(x^{k+1})$ such that 
\begin{align*}
q + \delta (x^{k+1}-y^k+u^k)=0.
\end{align*}
Let $p=q+\delta (u^{k+1}+x^{k+1}-y^{k+1})\in\partial_x L_\delta(x^{k+1},y^{k+1},u^{k+1})$, then we have 
\begin{align}\label{eq:sub_x}
&\|p\|_2=\|q+\delta (u^{k+1}+x^{k+1}-y^{k+1}) \|_2
=\|\delta (u^{k+1}-u^k+y^k-y^{k+1}) \|_2\nonumber\\
\leq &\delta\|u^{k+1}-u^k\|_2+\delta\|y^{k+1}-y^k\|_2
\leq  (L+\delta)\|y^{k+1}-y^k\|_2,
\end{align}
The optimality condition of~\eqref{eq:ADMM_y} and the update of $u$ in~\eqref{eq:ADMM_u} give that
\begin{align}
\|\nabla_y L_\delta(x^{k+1},y^{k+1},u^{k+1})\|=&\|\nabla l(y^{k+1})+\delta (-u^{k+1}+y^{k+1}-x^{k+1})\|\nonumber\\
    = &  \delta\|u^{k+1}-u^k\|\leq L\|y^{k+1}-y^k\|_2,\label{eq:sub_y}\\
\|\nabla_u L_\delta(x^{k+1},y^{k+1},u^{k+1})\|=& \|\delta (x^{k+1}-y^{k+1})\|=\delta\|u^{k+1}-u^k\| \nonumber\\
 \leq& L\|y^{k+1}-y^k\|_2. \label{eq:sub_u}
\end{align}
Thus~\eqref{eq:sub_diff_bound} is obtained by combining~\eqref{eq:sub_x},~\eqref{eq:sub_y}, and~\eqref{eq:sub_u}. \qed
\end{proof}

\begin{theorem}\label{thm:admm}
Let Assumptions~\ref{assume:Lipschitz} and~\ref{assume:coercive} be satisfied and $\delta > (3+\sqrt{17})L/2$ ($\delta > \sqrt{2}L$ if $l(x)$ is convex), then
\begin{itemize}
\item[1)] the sequence $(x^k,y^k,u^k)$ generated by ADMM is bounded and has at least one limit point.
\item[2)] $x^{k+1}-x^k\rightarrow 0$, $y^{k+1}-y^k\rightarrow 0$, and $u^{k+1}-u^k\rightarrow 0$.
\item[3)] each limit point $(x^*,y^*,u^*)$ is a stationary point of $L_\delta(x,y,u)$, and $x^*$ is a stationary point of $r(x)+l(x)$.
\end{itemize}
\end{theorem}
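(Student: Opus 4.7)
The plan is to use Lemma~\ref{lemma:ADMM}(1) as a Lyapunov-type sufficient decrease for the augmented Lagrangian $L_\delta$, combine it with coercivity (Assumption~\ref{assume:coercive}) to obtain boundedness of the iterates and square-summability of $\|y^{k+1}-y^k\|_2$, and finally identify any limit point as a stationary point via the subgradient bound in Lemma~\ref{lemma:ADMM}(2) together with closedness of the limiting subdifferential of $r$.

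For the descent, note that the $x$-update~\eqref{eq:ADMM_x} immediately gives $L_\delta(x^{k+1},y^k,u^k) \leq L_\delta(x^k,y^k,u^k)$, and chaining this with~\eqref{eq:suf_dec} produces the monotone inequality
\begin{align*}
L_\delta(x^{k+1},y^{k+1},u^{k+1}) \leq L_\delta(x^k,y^k,u^k) - c\,\|y^{k+1}-y^k\|_2^2
\end{align*}
with $c = \delta/2 - 3L/2 - L^2/\delta$. Requiring $c>0$ is the quadratic inequality $\delta^2 - 3L\delta - 2L^2 > 0$, whose positive root is precisely $(3+\sqrt{17})L/2$; in the convex case one replaces~\eqref{eq:suf_dec} by the sharper~\eqref{eq:suf_dec_convex} to recover the threshold $\sqrt{2}L$.

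To turn this into boundedness I would lower-bound $L_\delta$ using the identity $\delta u^{k+1} = \nabla l(y^{k+1})$ together with the descent lemma for $l$ applied at $y^{k+1}$, which yields
\begin{align*}
L_\delta(x^{k+1},y^{k+1},u^{k+1}) \geq r(x^{k+1}) + l(x^{k+1}) + \tfrac{\delta-L}{2}\|x^{k+1}-y^{k+1}\|_2^2.
\end{align*}
Since $\delta > L$, this simultaneously pins $r(x^{k+1})+l(x^{k+1}) \leq L_\delta(x^0,y^0,u^0)$, controls $\|x^{k+1}-y^{k+1}\|_2$, and furnishes the uniform lower bound for $L_\delta$ once coercivity is invoked. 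Assumption~\ref{assume:coercive} then forces $\{x^k\}$ bounded; the display above bounds $\{y^k\}$; and Lipschitz continuity of $\nabla l$ with $\delta u^{k+1} = \nabla l(y^{k+1})$ bounds $\{u^k\}$, proving item~1). Summing the descent inequality against the lower bound yields $\sum_k\|y^{k+1}-y^k\|_2^2 < \infty$, so $y^{k+1}-y^k \to 0$; the Lipschitz estimate $\delta\|u^{k+1}-u^k\|_2 \leq L\|y^{k+1}-y^k\|_2$ then gives $u^{k+1}-u^k \to 0$, and since $u^{k+1}-u^k = x^{k+1}-y^{k+1}$ we conclude $x^{k+1}-y^{k+1} \to 0$ and hence $x^{k+1}-x^k \to 0$, which is item~2).

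For item~3), fix a convergent subsequence $(x^{k_j},y^{k_j},u^{k_j}) \to (x^*,y^*,u^*)$ and apply Lemma~\ref{lemma:ADMM}(2), whose right-hand side $(3L+\delta)\|y^{k_j}-y^{k_j-1}\|_2$ tends to zero. The $y$- and $u$-components pass to the limit immediately by continuity of $\nabla l$, yielding $x^*=y^*$ and $\delta u^* = \nabla l(y^*)$. For the $x$-component, the lemma produces general subgradients $q^{k_j} \in \partial r(x^{k_j})$ with $q^{k_j} + \delta(u^{k_j}+x^{k_j}-y^{k_j}) \to 0$, so $q^{k_j} \to -\delta u^*$. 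The main obstacle is to justify $-\delta u^* \in \partial r(x^*)$, since $r$ is permitted to be nonconvex and only lower semicontinuous; the closedness property of the general subdifferential requires in addition that $r(x^{k_j}) \to r(x^*)$. I would verify this by comparing $x^{k_j}$, which minimizes~\eqref{eq:ADMM_x}, against the feasible test point $x^*$ and using lower semicontinuity of $r$ to pin $\lim r(x^{k_j}) = r(x^*)$. Closedness then yields $-\delta u^* \in \partial r(x^*)$, and substituting $x^*=y^*$ and $\delta u^* = \nabla l(x^*)$ gives $0 \in \partial r(x^*) + \nabla l(x^*)$, the desired stationarity of $r+l$ at $x^*$.
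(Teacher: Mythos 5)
Your proposal is correct and follows essentially the same route as the paper's proof: sufficient decrease of $L_\delta$ from Lemma~\ref{lemma:ADMM}(1) with the quadratic root computation for the threshold on $\delta$, the lower bound $L_\delta\geq r+l+\tfrac{\delta-L}{2}\|x-y\|_2^2$ via $\delta u^{k+1}=\nabla l(y^{k+1})$ and coercivity for boundedness, square-summability for item~2), and Lemma~\ref{lemma:ADMM}(2) for item~3). Your explicit verification that $r(x^{k_j})\to r(x^*)$ (by testing the $x$-subproblem against $x^*$) before invoking closedness of the general subdifferential is a point the paper's proof leaves implicit, and it is a correct and worthwhile addition rather than a deviation.
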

\begin{proof}
1) When $\delta>(3+\sqrt{17})L/2$, we have ${3L\over 2} +{L^2\over \delta} -{\delta\over2}<0$. In addition, for the case $l(x)$ being convex, we have ${L^2\over \delta}-{\delta\over 2}<0$ if $\delta>\sqrt{2}L$. There exists a positive constant $C_1$ that depends only on $L$ and $\delta$ such that 
\begin{align}
\label{eq:suf_dec_all}
L_\delta(x^{k+1},y^{k+1},u^{k+1}) - L_\delta(x^{k+1},y^k,u^k)
\leq -C_1\|y^{k+1}-y^k\|_2^2.
\end{align}
Next, we show that the augmented Lagrangian $L_\delta$ has a global lower bound during the iteration. From Assumption~\ref{assume:Lipschitz}, we have
\begin{align}
L_\delta (x^k,y^k,u^k) = & r (x^k)+l(y^k)+\delta\langle u^k,x^k-y^k\rangle +{\delta\over 2}\|x^k-y^k\|_2^2\nonumber\\
\geq & r(x^k) + l(x^k)+{\delta-L\over 2}\|x^k-y^k\|_2^2.\label{eq:low_bdd}
\end{align} 
Thus $L_\delta(x^k,y^k,u^k)$ has a global lower bound because of the coercivity of $r(x)+l(x)$ and $\delta>L$. It follows from~\eqref{eq:low_bdd} that $x^k, y^k, r(x^k)+l(x^k),$ and $\|x^k-y^k\|_2$ are all bounded. Therefore, $u^k$ is bounded because of Assumption~\ref{assume:Lipschitz}.

Due to the boundedness of $(x^k,y^k,u^k)$, there exists a convergent subsequence $(x^{k_i},y^{k_i},u^{k_i})$, i.e., $(x^{k_i},y^{k_i},u^{k_i})\rightarrow (x^*,y^*,u^*)$.

2) Since the sequence $L_\delta(x^k,y^k,u^k)$ is bounded below,~\eqref{eq:suf_dec_all} implies that $\sum_{k=1}^\infty\|y^{k+1}-y^k\|_2^2<\infty$ and $\|y^{k+1}-y^k\|_2^2\rightarrow 0$, i.e., $y^{k+1}-y^k\rightarrow0$. In addition, we have $u^{k+1}-u^{k}\rightarrow 0$ and $x^{k+1}-x^{k}\rightarrow 0$ due to Assumption~\ref{assume:Lipschitz} and ~\eqref{eq:ADMM_u} respectively.

3) Part 2 of Lemma~\ref{lemma:ADMM} and $y^{k+1}-y^k\rightarrow 0$ suggest that $(x^*,y^*,u^*)$ is a stationary point of $L_\delta(x,y,u)$. Since $(x^*,y^*,u^*)$ is a stationary point, we have $x^*=y^*$ from~\eqref{eq:sub_u}, then~\eqref{eq:sub_x} implies that $\delta u^*=\nabla l(y^*)$  and $0\in\partial_x r(x^*)+ \nabla l(x^*)$, i.e., $x^*$ is a stationary point of $r(x)+l(x)$.  \qed
\end{proof}

\begin{remark}
In~\cite{li2015global}, the authors show the convergence of the same ADMM algorithm when $l(y)=\|Ay-b\|_2^2$ and $\delta>\sqrt{2}L$, other choices of $l(x)$ are not considered in~\cite{li2015global}. 
The proof of Theorem~\ref{thm:admm} is inspired from~\cite{wang_global_2015}. 
Early versions of~\cite{wang_global_2015} on arXiv.org require that $r(x)$ is restricted prox-regular, while our $r(x)$ does not satisfy because it is positive homogeneous and nonconvex. However, we would like to mention that later versions of~\cite{wang_global_2015} after our paper cover our result.
\end{remark}

The following example shows that both FBS and ADMM may converge to a stationary point that is not a local minimizer.
\begin{example} Let $n=2$ and the objective function be 
\begin{align*}
\textstyle \|x\|_1-\|x\|_2+ {1\over 2}\|x_1+x_2-1\|_2^2.
\end{align*}
We can verify that $(1,0)$ and $(0,1)$ are two global minimizers with objective function value $0$. There is another stationary point $x^*=({1\over 2\sqrt{2}},{1\over 2\sqrt{2}})$ for this function. 
Assume that we assign the initial $x^0=(c^0,c^0)$ with $c^0> 0$, FBS generates $x^k=(c^k,c^k)$ where $c^{k+1}=(1-2\lambda )c^k+\lambda/\sqrt{2}$ for all $\lambda<1/L=1/2$. For ADMM, let $y^0=(d^0,d^0)$ and $u^0=(e^0,e^0)$ such that $e^0>0$ and $d^0>e^0+1/\delta$, then ADMM generates $x^k=(c^k,c^k)$, $y^k=(d^k,d^k)$, and $u^k=(e^k,e^k)$ with 
\begin{align*}
c^{k+1} & =\textstyle d^k-e^k-{1\over\delta}(1-{1\over \sqrt{2}}),\\
d^{k+1} & =\textstyle {\delta\over2+\delta}(c^{k+1}+e^k) +{1\over 2+\delta}={\delta\over2+\delta}d^k +{1\over 2+\delta}{1\over\sqrt{2}},\\
e^{k+1} & =\textstyle {2\over 2+\delta}(c^{k+1}+e^k)-{1\over 2+\delta}={2\over 2+\delta}d^k-{1\over 2+\delta}{1\over\sqrt{2}}-{1\over \delta}\left(1-{1\over\sqrt{2}}\right).
\end{align*}
\end{example}

\section{Numerical Experiments}\label{sect:experiments}
In this section, we compare our proposed algorithms with DCA on three types of matrices: random Gaussian, random partial DCT, and random over-sampled DCT matrices. 
Both random Gaussian and partial DCT matrices satisfy the RIP with high probabilities~\cite{candesRT06}. The size of these two types of matrices is $64\times 256$. 
Each entry of random Gaussian matrices follows the standard normal distribution, i.e., zero-mean with standard deviation of one, while we randomly select rows from the full DCT matrix to form partial DCT matrices.
The over-sampled DCT matrices are highly coherent, and they are derived from the problem of spectral estimation~\cite{fannjiangL12} in signal processing.
An over-sampled DCT matrix is defined as $A=[\ba_1, \cdots, \ba_N]\in\mathbb{R}^{M\times N}$ with
\begin{equation*}
\textstyle\ba_j = \frac 1 {\sqrt{N}} \cos\left(\dfrac {2\pi \bw j}{F}\right),\qquad j = 1, \cdots, N,
\end{equation*}
where $\bw$ is a random vector of length $M$ and $F$ is the parameter used to decide how coherent the matrix is. 
The larger $F$ is, the higher the coherence is.  We consider  two  over-sampled DCT matrices of size  $100\times 1500$ with $F=5$ and $F=20$. 
All the testing matrices are normalized to have unit (spectral) norm. 

As for the  (ground-truth) sparse vector, we generate the random index set and draw  non-zero elements following the standard normal distribution.
We compare the performance and efficiency of all algorithms in recovering the sparse vectors for both the noisy and noise-free cases. 
For the noisy case, we may also construct the noise such that the sparse vectors are stationary points. 	
The initial value for all the implementations  is chosen to be an approximated solution of the $L_1$ minimization, i.e.,
\[
\textstyle x^0 = \argmin_x~\gamma\|x\|_1 + \frac 1 2 \|Ax-b\|_2^2.
\]	
The approximated solution is obtained after 2N ADMM iterations. 
The stopping condition for the proposed FBS and ADMM is either $\|x^{k+1}-x^k\|_2/\|x^k\|_2<1e^{-8}$ or $k>10N$.


We examine the overall performance in terms of recovering exact sparse solutions for the noise-free case. 
In particular, we look at success rates with 100 random realizations. A trial is considered to be successful if the relative error of the reconstructed solution $x_r$ by an algorithm to the ground truth $x_g$ is less than .001, \emph{i.e.}, $\frac{\|x_r-x_g\|}{\|x_g\|}<.001$. For the noisy case, we compare the mean-square-error of the reconstructed solutions. All experiments are performed using Matlab 2016a on a desktop (Windows 7, 3.6GHz CPU, 24GB RAM). The Matlab source codes can be downloaded at \url{https://github.com/mingyan08/ProxL1-L2}.

\subsection{Constructed Stationary Points}

We construct the data term $b$ such that a given sparse vector $x^*$ is a stationary point of the unconstrained $L_1$-$L_2$ problem,
\begin{equation}\label{eq:uncon}
\textstyle x^*=\argmin_x~ \gamma(\|x\|_1-\|x\|_2^2) + \frac 1 2 \|Ax-b\|_2^2,
\end{equation} 
for a given positive parameter $\gamma$. 
This can be done using a similar procedure as for the $L_1$ problem~\cite{lorenz2011constructing}. In particular, any non-zero stationary point satisfies the following first-order optimality condition: 
\begin{equation}\label{eq:uncon_construct}
\textstyle \gamma\left(p^*-\frac {x^*}{\|x^*\|_2}\right)+A^\top(Ax^*-b)=0,
\end{equation}
where $p^*\in\partial\|x^*\|_1$. Denote Sign$(x)$ as the multi-valued sign, \emph{i.e.},
\begin{equation*}
y\in\mbox{Sign}(x)\ \Longleftrightarrow\ y_i\left\{\begin{array}{ll}
=1, & \mbox{if} \ x_i>0,\\
=-1, & \mbox{if} \ x_i<0,\\
\in[-1,1], & \mbox{if} \ x_i=0.
\end{array}\right.
\end{equation*}
Given $A,~\gamma$, and  $x^*$, we want to find $w\in \mbox{Sign}(x^*)$ and $w-\frac {x^*}{\|x^*\|_2} \in \mbox{Range}(A^\top)$. 
If $y$ satisfies $A^\top y=w-\frac {x^*}{\|x^*\|_2} $ and $b$ is defined by $b=\gamma y+Ax^*$, then $x^*$ is a stationary point to~\eqref{eq:uncon}. 
To find $w\in\mathbb R^N$, we consider the projection onto convex sets (POCS)~\cite{cheney1959proximity} by alternatively projecting onto two convex sets: $w\in \mbox{Sign}(x^*)$ and $w-\frac {x^*}{\|x^*\|_2} \in \mbox{Range}(A^\top)$. In particular, we compute the orthogonal basis of $A^\top$, denoted as $U$, for the sake of projecting onto the set $\mbox{Range}(A^\top)$. The iteration starts with $w^0\in\mbox{Sign}(x^*)$ and proceeds  
\begin{align*}
	 w^{k+1} = P_{\mbox{Sign}(x^*)}\left(UU^T\left(w^k - \frac {x^*}{\|x^*\|_2}\right) + {x^*\over \|x^*\|_2}\right),
\end{align*}
until  a stopping criterion is reached. The stopping condition for POCS is ether $\|w^{k+1}-w^k\|_2<1e^{-10}$ or $k>10N$. Note that  POCS may not converge and $w$ may not exist, especially when $A$ is highly coherent. 

For constructed test cases\footnote{If POCS does not converge, we discard this trial in the analysis.} with giving $A,~\gamma,~x^*,$ and $b$, we study the convergence of three $L_1$-$L_2$ implementations (DCA, FBS, and ADMM). We consider the sparse vector $x$ with sparsity 10. We fix $\lambda = 1$ (FBS stepsize), and $\delta = 0.1$ (ADMM stepsize). 
We only consider incoherent matrices (random Gaussian and partial DCT) of size $64\times 256$, as it is hard to find an optimal solution to~\eqref{eq:uncon_construct} for over-sampled DCT matrices. Figure~\ref{fig:sparsity_Gauss_DCT} shows that FBS and ADMM are much faster than the DCA in finding the stationary point $x^*$. 
Here we give a justification of the speed by complexity analysis. For each iteration, FBS requires to compute the matrix-vector multiplication of complexity $O(MN)$ and shrinkage operator of complexity $O(N)$, while ADMM requires a matrix inversion of $O(M^3)$. As for DCA, it requires to solve an $L_1$ minimization problem iteratively; at each iteration, the complexity is equivalent to FBS or ADMM, whichever we use to solve the subproblem. As a result, the DCA is much slower than FBS and ADMM.  


\begin{figure}
\centering
\begin{tabular}{cc}
(a) Gaussian, $\gamma=0.01$ & (b) DCT, $\gamma = 0.01$\\
\includegraphics[width=0.46\textwidth]{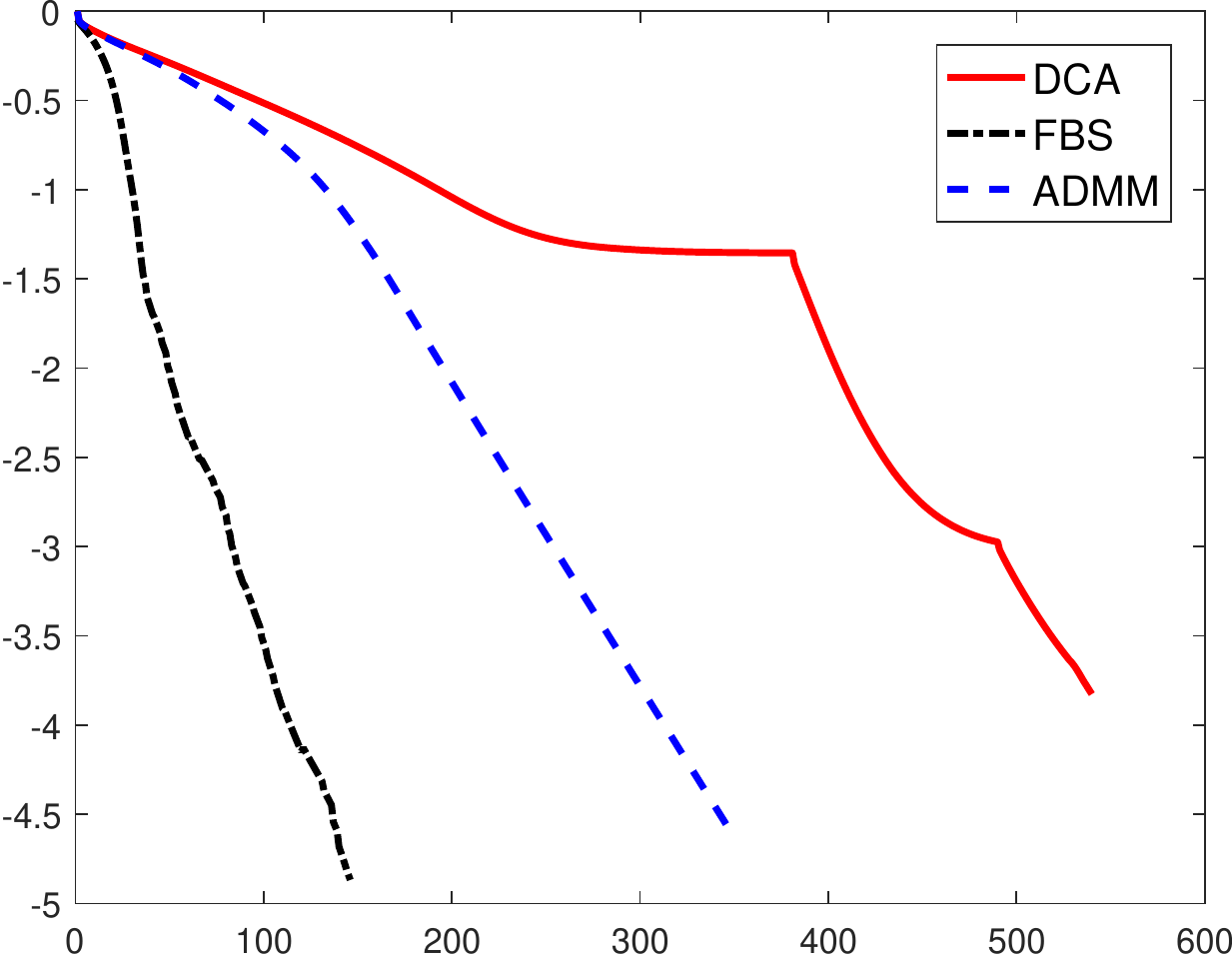}&
\includegraphics[width=0.46\textwidth]{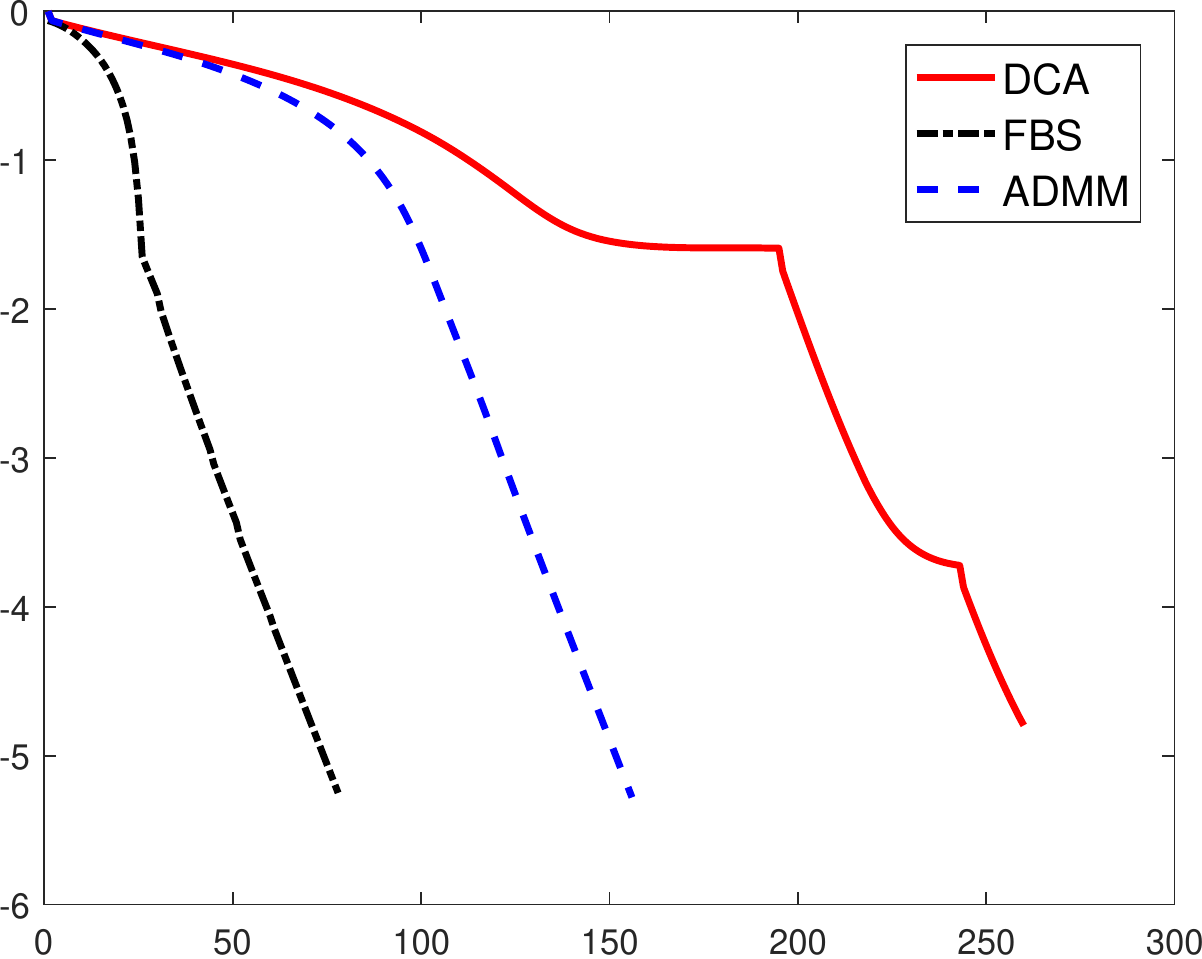}\\
(c) Gaussian, $\gamma=0.1$  & (d) Gaussian, $\gamma = 0.001$\\
\includegraphics[width=0.46\textwidth]{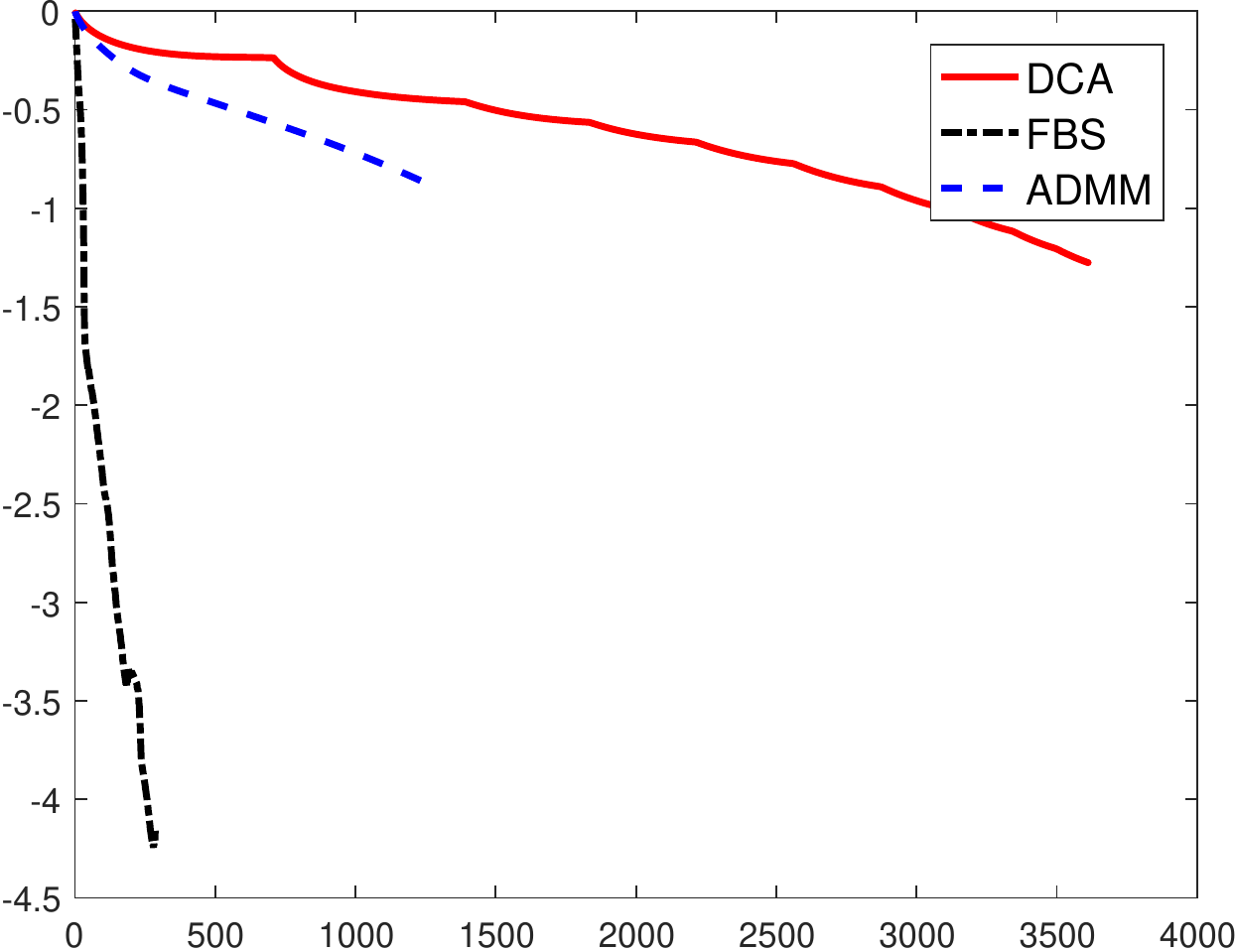}&
\includegraphics[width=0.46\textwidth]{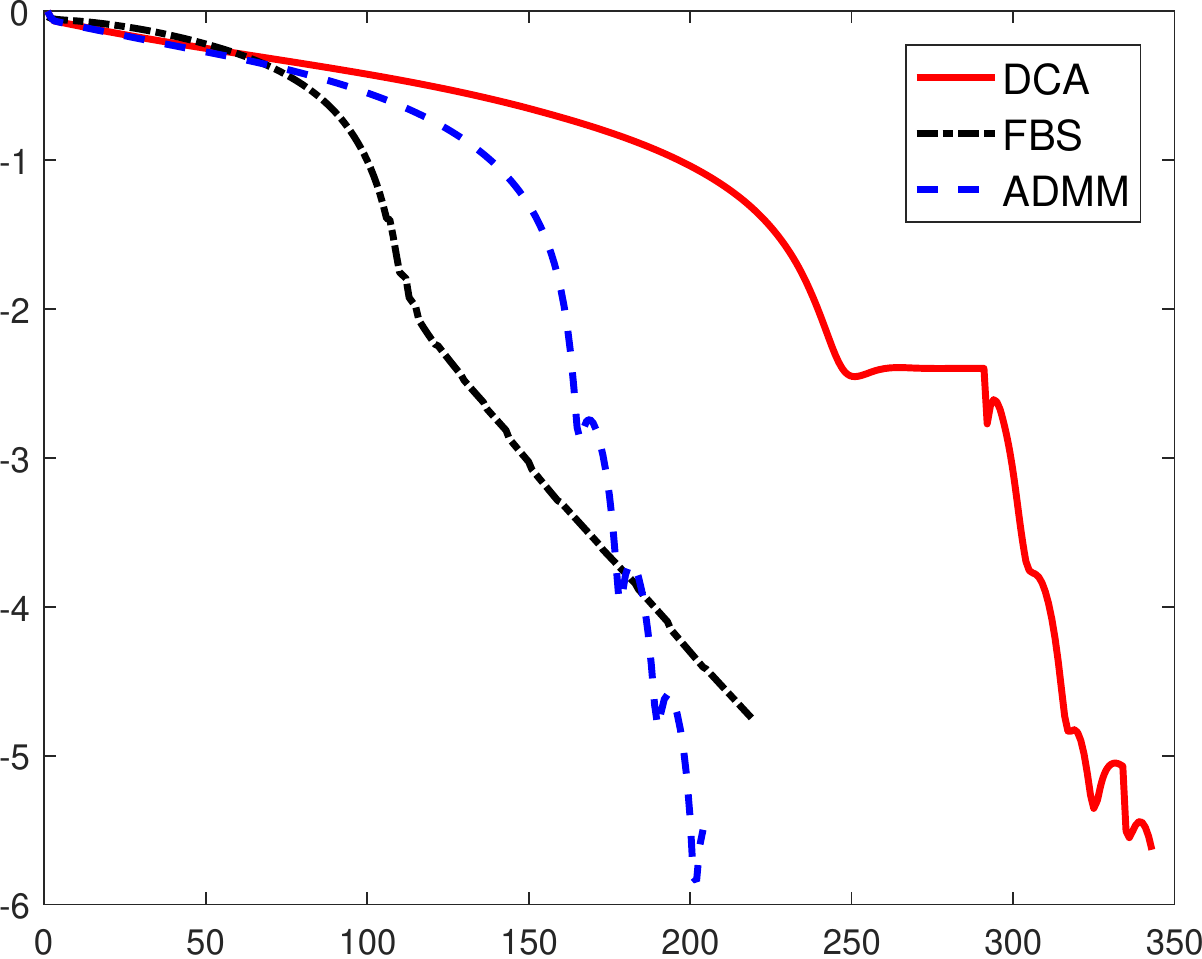}\\
\end{tabular}
\caption{Computational efficiency. Problem setting: a matrix $A$ is of size $64\times 256$ (random Gaussian or partial DCT) and $x_g$ has 10  non-zero elements drawn from standard Gaussian distribution; $b$ is constructed such that $x_g$ is a stationary point of the unconstructed $L_1$-$L_2$ minimization. In each case, we plot the error to the ground-truth solution versus iteration numbers (the number of matrix-vector multiplications divide by two because it is the most time consuming part) for three  $L_1$-$L_2$ minimization methods: DCA, FBS, and ADMM;  FBS and ADMM are much faster than DCA.  } \label{fig:sparsity_Gauss_DCT}
\end{figure}

%
%

\subsection{Noise-free case}

In this section, we look at the success rates of finding a sparse solution while satisfying the linear constraint $Ax=b$. We consider an unconstrained formulation with a small regularizing parameter in order to enforce the linear constraint.  In particular, we choose $\gamma=1e^{-6}$ for random Gaussian matrices and $\gamma=1e^{-7}$ for oversampled DCT matrices, which are shown to have good recovery results. As for algorithmic parameters, we choose  $\delta=10\gamma$ for ADMM and DCA. FBS does not work well with a very small regularization parameter  $\gamma$, while a common practice  is gradually decreasing its value. We decide not to compare with FBS in the noise-free case. Figure~\ref{fig:successrate_uncon} shows that both DCA and ADMM often yield the same solutions when sensing matrix is incoherent, \emph{e.g.}, random Gaussian and over-sampled DCT with F=5; while DCA is better than ADMM  for highly coherent matrices (bottom right plot of Figure~\ref{fig:successrate_uncon}.) We suspect the reason to be that DCA is  less prone to parameters and numerical errors than ADMM, as each DCA subproblem is convex; we will examine  extensively in the future work.
This hypothesis motivates us to design a continuation strategy of updating $\alpha$ in the weighted model of $L_1$-$\alpha L_2$. 
Particularly for incoherent matrices, we want  $\alpha$ to approach to 1 very quickly, so we 
consider  a linear update of $\alpha$ capped at 1 with a large slope. If the matrix is coherent, we want to impose a smooth transition of $\alpha$ going from zero to one, and we choose a sigmoid function to change $\alpha$ at every iteration $k$, i.e.,
\begin{equation}\label{eq:alpha}
\alpha(k) = \frac 1 {1+ae^{-rk}},
\end{equation}
where $a$ and $r$ are parameters.  We plot the evolution of $\alpha$ for over-sampled DCT when $K=5$ (incoherent) and $K=20$ (coherent) on the top right plot of Figure~\ref{fig:successrate_uncon}. Note that the iteration may stop before $\alpha$ reaches to one. We call this updating scheme a weighted model. In Figure~\ref{fig:successrate_uncon}, we show that the weighted model is  better than DCA and ADMM when the matrix is highly coherent.

\begin{figure}
\centering
\begin{tabular}{cc}
Gaussian  & The update for $\alpha$\\
\includegraphics[width=0.46\textwidth]{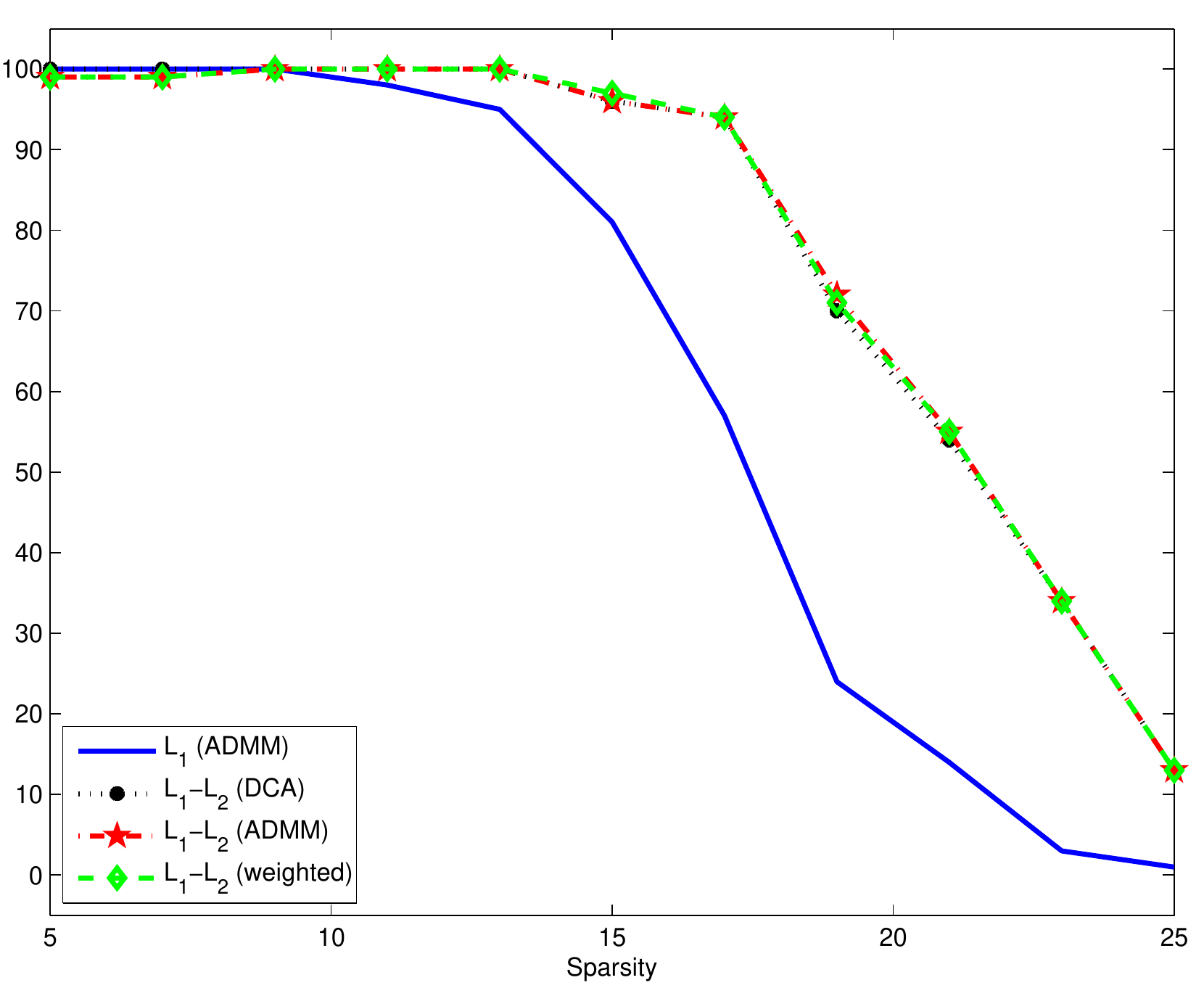}&
\includegraphics[width=0.46\textwidth]{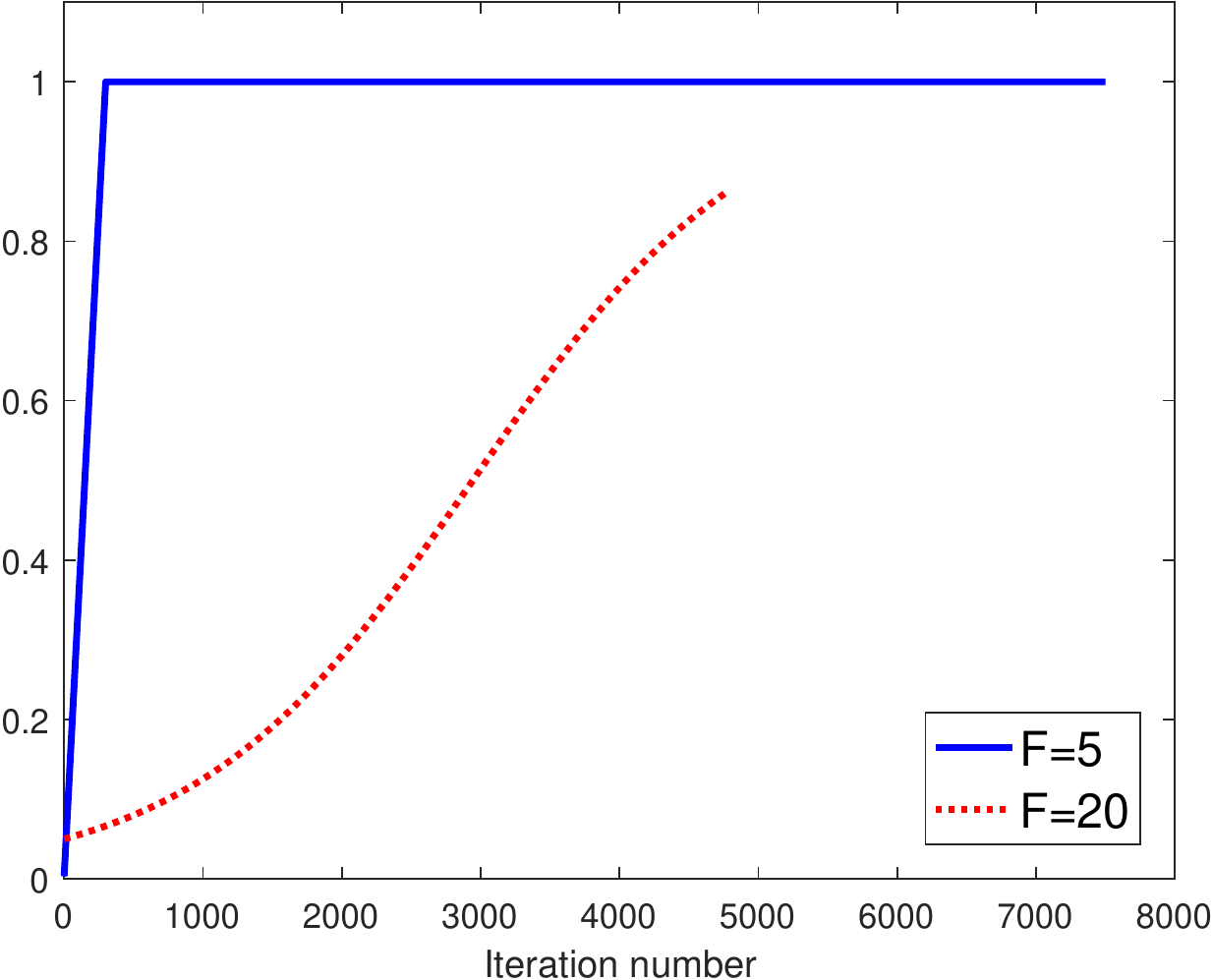}\\
$F=5$  & $F= 20$\\
\includegraphics[width=0.46\textwidth]{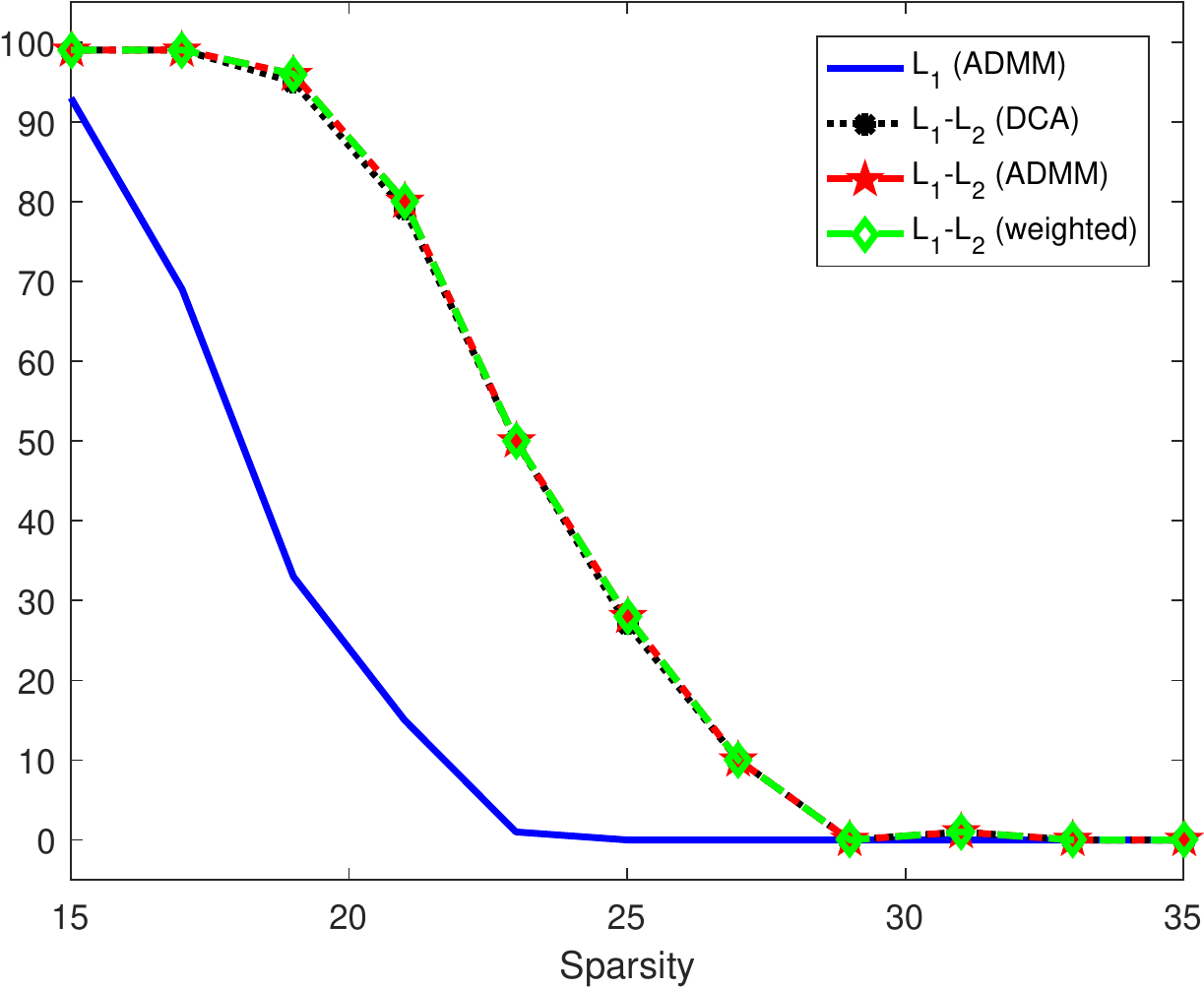}&
\includegraphics[width=0.46\textwidth]{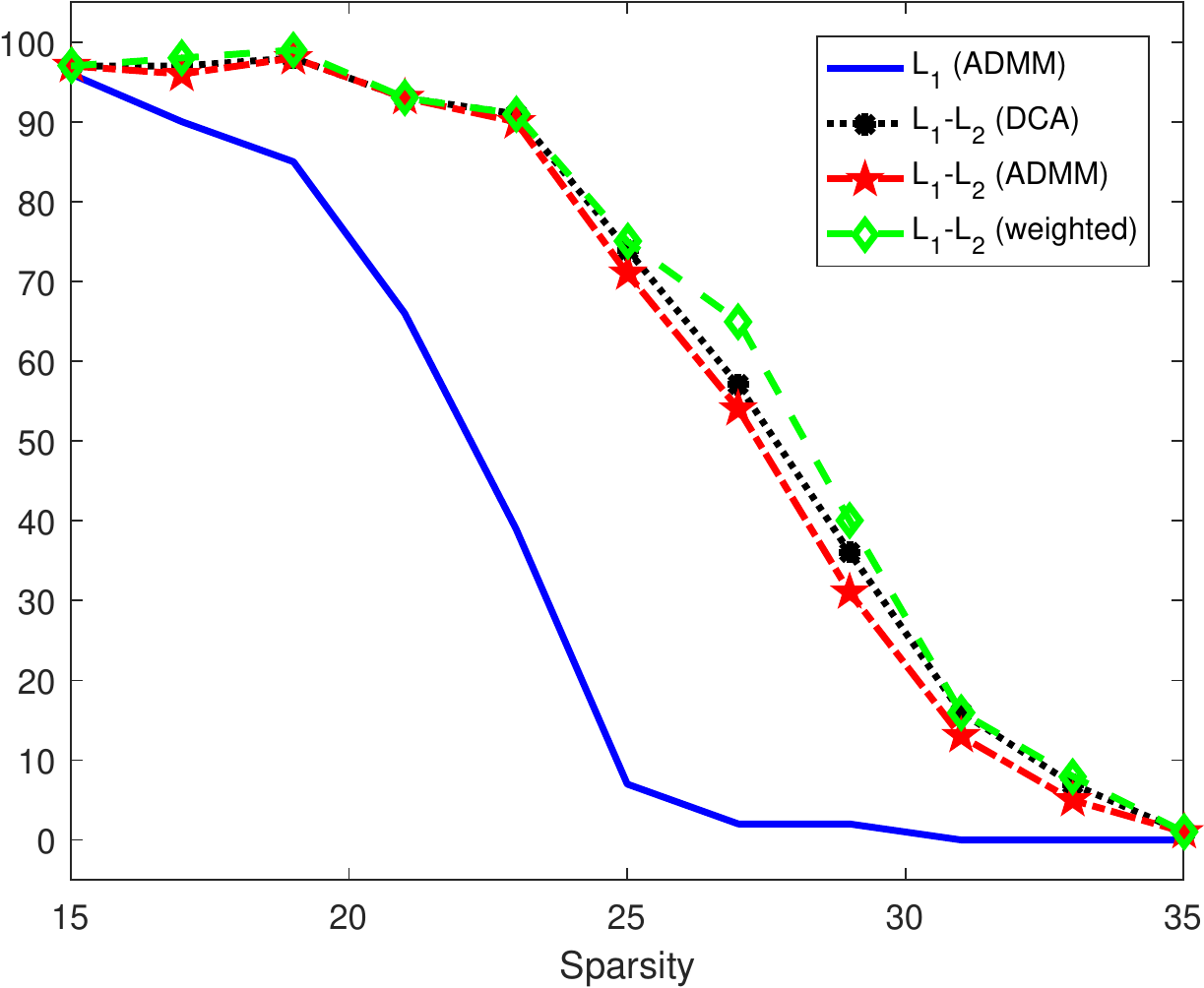}\\
\end{tabular}
\caption{Success rates of random Gaussian matrices and over-sampled DCT matrices for $F=5$ and $F=20$. The  ADMM approach yields almost the same results compared to the DCA for incoherent matrices (random Gaussian and over-sampled DCT with $F=5$), and the weighted model with a specific update of $\alpha$ (see top right plot) achieves the best results in the highly coherent case (over-sampled DCT with $F=20$). } \label{fig:successrate_uncon}
\end{figure}

Although the DCA gives better results for coherent matrices, it is much slower than ADMM  in the run time. The computational time averaged over 100 realizations for each method is reported in Table~\ref{tab:time}. DCA is almost one order of magnitude slower than ADMM and weighted model. The time for the $L_1$ minimization via ADMM is also provided. Table~\ref{tab:time} shows that $L_1$-$L_2$ via ADMM and weighted model are comparable to the $L_1$ approach in efficiency. The weighted model achieves the best recovery results in terms of both success rates and computational time.

\begin{table}
\caption{Mean and standard deviation of computational time (\textit{sec.}) for recovering $20$-sparse vectors.}
\label{tab:time}
\centering
\begin{tabular}{|l|c|c|c|c|c|}\hline
 & size & $L_1$ (ADMM) & DCA & ADMM &  weighted\\\hline
 Gaussian & $64\times 256$ & 0.06 (0.01) & 0.34 (0.14)  &  0.13 (0.02) & 0.13 (0.03)\\\hline
 DCT & $64\times 256$ & 0.06 (0.03) & 0.29 (0.15)  &  0.12 (0.02) & 0.12 (0.03)\\\hline
 F=5 & $100\times 1500$ & 0.83 (0.23) & 2.69 (1.72) &  1.09 (0.40)& 1.12 (0.40)\\\hline
 F=20 & $100\times 1500$ & 1.02 (0.04) & 3.36 (0.34) &  1.28 (0.09)  &  1.31 (0.08)\\\hline
\hline
\end{tabular}
\end{table}

\subsection{Noisy Data}

Finally we provide a series of simulations to demonstrate sparse recovery with noise, following an experimental setup in~\cite{Xu2012}. We consider a signal $x$ of length $N=512$ with $K=130$ non-zero elements. We try to recover it from $M$ measurements $b$ determined by a normal distribution matrix $A$ (then each column is normalized with zero-mean and unit norm), with white Gaussian noise of standard deviation $\sigma = 0.1$.  To compensate the noise, we use the mean-square-error (MSE) to quantify the recovery performance. 
If the support of the ground-truth solution $x$ is known, denoted as $\Lambda=\mbox{supp}(x)$, we can compute the MSE of an oracle solution, given by the formula $\sigma^2\mbox{tr}(A_{\Lambda}^TA_{\Lambda})^{-1}$, as benchmark.

We want to compare $L_1$-$L_2$ with $L_{1/2}$  via the half-thresholding method\footnote{We use the author's Matlab implementation with default parameter settings and the same stopping condition adopted as $L_1$-$L_2$ in the comparsion.}~\cite{Xu2012}, which uses an updating scheme for $\gamma$. We observe all the $L_1$-$L_2$ implementations with a fixed parameter $\gamma$ almost have the same recovery performance. In addition, we heuristically consider  to choose $\gamma$ adaptively based on the sigmoid function~\eqref{eq:alpha} with $a=-1, r = 0.02$, along with the FBS framework. Therefore, 	
	 we record  the MSE 
of two $L_1$-$L_2$ implementations: ADMM with fixed $\gamma = 0.8$ and FBS with updating $\gamma$. The $L_1$ minimization via FBS with updating $\gamma$ is also included. Each number in Figure~\ref{fig:noisyAll} is based on the average of 100 random realizations of the same setup. $L_1$-$L_2$ is better than $L_{1/2}$ when $M$ is small, but it is the other way around for large $M$. It is consistent with the observation in~\cite{yinLHX14} that $L_p$ $(0<p<1)$ is better than $L_1$-$L_2$ for incoherent sensing matrices. When $M$ is small, the sensing matrix becomes coherent, and $L_1$-$L_2$ seems to show advantages and/or robustness over $L_p$. 

In Table~\ref{tab:noisy}, we present the mean and standard deviation of MSE and computational time at four particular $M$ values: 238, 250, 276, 300, which were considered in~\cite{Xu2012}. Although the half-thresholding achieves the best results for large $M$, it is much more slower than other competing methods. We hypothesize that the convergence of $L_{1/2}$ via half-threshdoling is slower than the $L_1$-$L_2$ approach.


\begin{figure}
\centering
\includegraphics[width=0.9\textwidth]{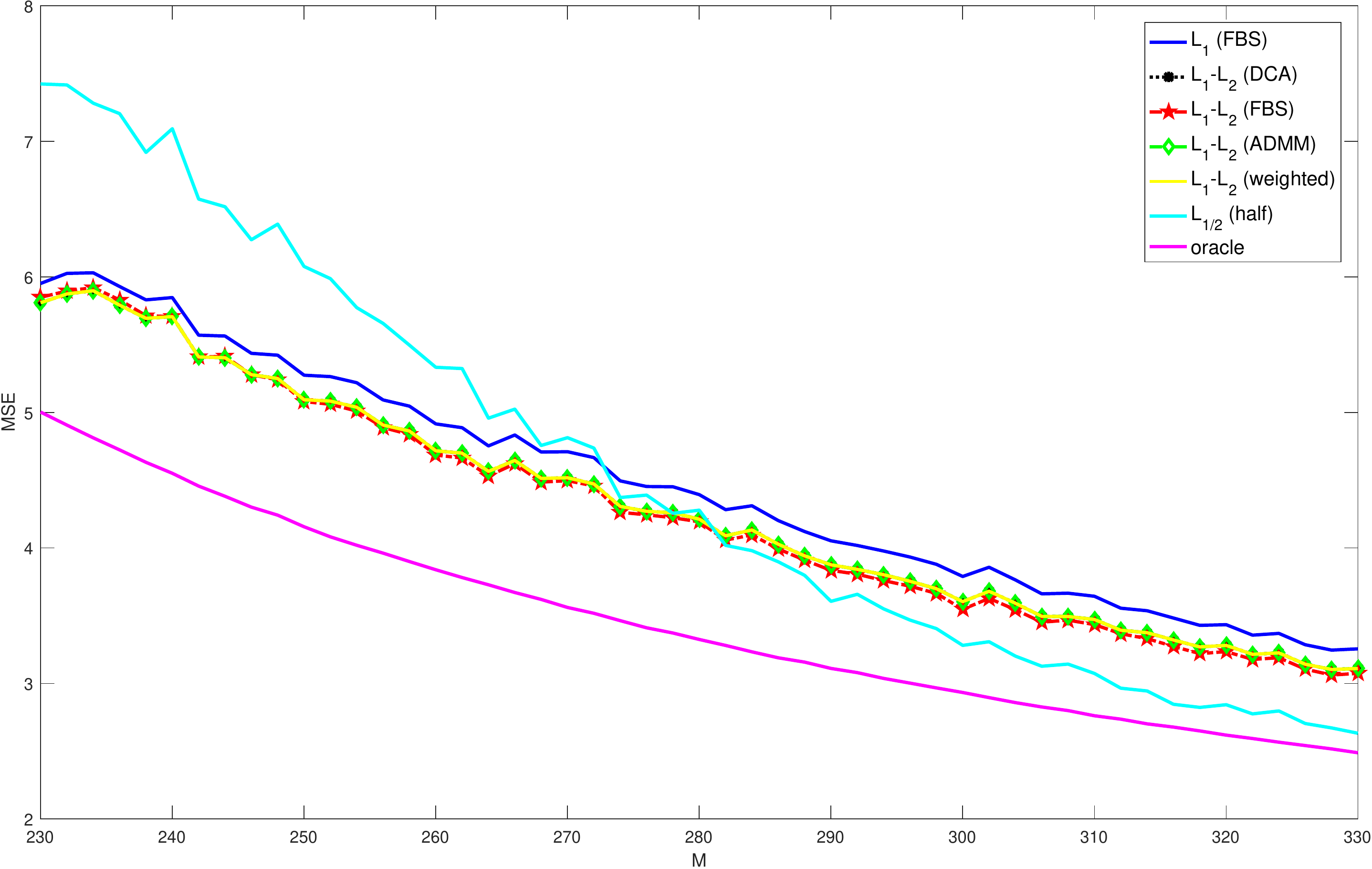}
\caption{MSE of sparse recovery under the presence of additive Gaussian white noise. The sensing matrix is of size $M\times N$, where $M$ ranges from 230 to 300 and $N = 512$. The ground-truth sparse vector contains 130 non-zero elements. The MSE values are averaged over 100 random realizations.}\label{fig:noisyAll}
\end{figure}

\begin{table}
	\centering
	\begin{tabular}{l||ccc||ccc}\hline\hline
		Methods & M & MSE & Time (sec.) & $M$ & MSE & Time (sec.)  \\\hline
		oracle  &   & 4.63 (1.00) &  &  & 4.15 (1.06) &\\
		$L_1$(FBS)  &  & 5.83 (0.74)& 0.18 (0.03)  & & 5.27 (0.65) & 0.17 (0.02)\\
		$L_1$-$L_2$(FBS) & 238& 5.71 (0.79)& 0.57 (0.26) &250 & 5.08 (0.67) & 0.49 (0.20)\\
		$L_1$-$L_2$(ADMM) && 5.69 (0.77)& 0.34 (0.09) & & 5.09 (0.65)& 0.31 (0.08) \\
		$L_{1/2}$~\cite{Xu2012} &  & 6.91 (1.00)& 1.92 (0.13) &&  6.08 (1.06) & 1.89 (0.23)\\\hline\hline
		Methods & M & MSE & Time (sec.) & $M$ & MSE & Time (sec.)  \\\hline
		oracle & & 3.41 (0.76)&& & 2.93 (0.55)&\\
		$L_1$ (FBS)  &  & 4.45 (0.51) & 0.22 (0.03)   &  & 3.79 (0.45)& 0.20 (0.02) \\
		$L_1$-$L_2$(FBS) & 276 & 4.24 (0.52)& 0.71 (0.33)&300& 3.54 (0.43)& 0.49 (0.15)\\
		$L_1$-$L_2$(ADMM) && 4.27 (0.51)& 0.25 (0.08) && 3.60 (0.43)&0.19 (0.05)\\
		$L_{1/2}$~\cite{Xu2012}  & & 4.39 (0.76)& 2.84 (0.33) & & 3.28 (0.55)& 2.99 (0.29)\\\hline
	\end{tabular}
	\caption{Recovery results of noisy signals (mean and standard deviation over 100 realizations).}\label{tab:noisy}
\end{table}

\section{Conclusions}\label{sect:conclusion}

We derived a proximal operator for $L_1$-$\alpha L_2$, as analogue to the soft shrinkage for $L_1$. This makes some fast $L_1$ solvers such as FBS and ADMM  applicable to minimize $L_1$-$\alpha L_2$. We discussed these two algorithms in details with convergence analysis. We  demonstrated numerically that FBS and ADMM together with this proximal operator are much more efficient than the DCA approach. In addition, we observed DCA gives better recovery results than ADMM for coherent matrices, which motivated us to consider a continuation strategy in terms of $\alpha$. 

\section*{Acknowledgments}
The authors would like to thank Zhi Li and the anonymous  reviewers for valuable comments.

%
%
%
%

\bibliographystyle{spmpsci}
\bibliography{l1l2}

\end{document}